\documentclass[reqno,english,11pt]{amsart}
\usepackage[utf8]{inputenc}
\usepackage{amsmath,amssymb,amsthm,mathrsfs,color,times,textcomp,yfonts,mathtools,cases}
\usepackage[textheight=615pt, textwidth=360pt, left=0.8in, right=0.8in, top=0.8in, bottom=0.8in]{geometry}

\allowdisplaybreaks[4]

\usepackage{bm}

\usepackage[T1]{fontenc} 

\newcommand{\rme}{\mathrm{e}}
\newcommand{\rmd}{\mathrm{d}}

\usepackage{cite} 
\usepackage{subcaption}
\usepackage[normalem]{ulem}
\usepackage[export]{adjustbox}
\usepackage{esint}
\usepackage{xcolor}
\usepackage{array}
\usepackage[colorlinks=true]{hyperref}
\hypersetup{urlcolor=blue, citecolor=blue, linkcolor=red}

\hypersetup{
	colorlinks=true,
	linkcolor=red
}

\usepackage[square,numbers]{natbib}

\usepackage{float}
\usepackage{ulem}
\usepackage{syntonly}
\usepackage{mathtools}
\usepackage{bm}
\usepackage{soul}
\usepackage{amsfonts,amsmath,latexsym,verbatim,amscd,mathrsfs,color,array}
\usepackage[colorlinks=true]{hyperref}

\usepackage{amsmath,amssymb,amsthm,amsfonts,graphicx,color}
\usepackage{amssymb}
\usepackage{epstopdf}

\newcommand{\R}{ \mathbb{R}}

\theoremstyle{plain}
\newtheorem{theorem}{Theorem}[section]
\newtheorem{lemma}[theorem]{Lemma}

\newtheorem{corollary}[theorem]{Corollary}
\newtheorem{remark}{Remark}[theorem]
\newcommand{\bremark}{\begin{remark} \em}
	\newcommand{\eremark}{\end{remark} }

\numberwithin{equation}{section}

\begin{document}
	
	\title[Critical surface for two component BEC system]{Critical surface for two component Bose-Einstein Condensates}
\author[W. Peng]{Wenshuai Peng}
\author[X. Zeng]{Xiaoyu Zeng}
\author[Q. Zhang]{Qidi Zhang}
\author[H. Zhou]{Huan-Song Zhou}

\address[W. Peng]{
	 Center for Mathematical Sciences,
	Wuhan University of Technology
	\newline \indent Wuhan 430070, P.R. China}
\email{\href{mailto:wspeng@whut.edu.cn}{wspeng@whut.edu.cn}}

\address[X. Zeng]{ Center for Mathematical Sciences,
	Wuhan University of Technology
	\newline \indent Wuhan 430070, P.R. China
}
\email{\href{mailto:xyzeng@whut.edu.cn}{xyzeng@whut.edu.cn}}

\address[Q. Zhang]{ 
	Institut de Math\'{e}matiques de Jussieu, Sorbonne Universit\'{e}, Universit\'{e} Paris Cit\'{e}, 4 place, Jussieu, 75005 Paris, France
}
\email{\href{mailto:qzhang@imj-prg.fr}{qzhang@imj-prg.fr}}

\address[H. Zhou]{ Center for Mathematical Sciences,
	Wuhan University of Technology
	\newline \indent Wuhan 430070, P.R. China
}
\email{\href{mailto:hszhou@whut.edu.cn}{hszhou@whut.edu.cn}}

\subjclass{35J20, 35Q40, 46N50} 

\keywords{Gross--Pitaevskii functional; Constraint minimization problem; Ground states; Critical surface}
	
	\begin{abstract}

We investigate the existence of ground states for two-component Bose--Einstein condensates with intraspecies interactions $a_1, a_2 \in  (0, a^*)$ and interspecies interaction $\beta>0$. By carefully investigating an associated auxiliary minimization problem, we prove the existence of a unique, continuous, critical surface $\gamma = \gamma(a_1, a_2)$ between $\beta_*:= \sqrt{(a^* - a_1) (a^* - a_2)}$ and $\beta^*:= a^* - \frac{a_1 + a_2}{2}$. This surface provides a complete classification for the existence of minimizers: ground states exist for $\beta \in (0,\gamma)$ and do not exist for $\beta > \gamma$. Additionally, when the trapping potentials are continuous at a common zero point, there is no ground state for $\beta = \gamma$. Our results close the gap case $\beta \in [\beta_*, \beta^*]$ when $a_1\ne a_2$ in literature, significantly extending the results in \cite{Bao-Cai-1,guoBlowupSolutionsTwo2017a}.

\end{abstract}
\maketitle

\section{Introduction and main results}
	
The theoretical study of Bose--Einstein condensates (BECs) has attracted considerable attention over the past several decades, driven by both fundamental interest in quantum many-body physics and the rapid development of experimental techniques for ultracold atomic gases \cite{PhysRevLett.75.1687, PARKINS19981}. In particular, two-component (or binary) BECs have been widely investigated, as they exhibit a wealth of phenomena that are absent in single-component condensates, including phase separation, domain formation, and richer symmetry-breaking patterns \cite{PhysRevLett.81.1543, PhysRevLett.78.586, PhysRevLett.80.2027}. These features have motivated a large body of mathematical research devoted to understanding the ground states, dynamical properties, and phase transitions of binary condensates.

A standard theoretical framework for describing the ground states of two-component BECs in $\mathbb{R}^2$ is provided by the Gross--Pitaevskii (GP) theory \cite{hoBinaryMixturesBose1996}. Specifically, the ground states can be characterized as minimizers of the following constrained minimization problem
\begin{equation}\label{eq-minimizer-problem-e}
	 \mathfrak{e}(a_1, a_2, \beta) := \inf\limits_{(u_1, u_2) \in \mathcal{M}}\mathcal{E}_{a_1, a_2, \beta}(u_1, u_2).
\end{equation}
Here, $\mathcal{E}_{a_1, a_2, \beta}(\cdot)$ is the well-known Gross--Pitaevskii functional defined as
\begin{equation}\label{th-26Sep18-4}
	\mathcal{E}_{a_1, a_2, \beta}(u_1, u_2) := \sum_{i=1}^{2} \Big( \| \nabla u_i\|_2^2 + \int_{\R^2} V_i(x) |u_i(x)|^2 \rmd x - \frac{a_i}{2} \| u_i\|_4^4\Big) -\beta \| u_1 u_2\|_2^2.
\end{equation}	
The mass constraint $\mathcal{M}$ is defined as
\begin{equation}\label{thre-26Sep12-1} 
	\mathcal{M} := \big\{ (u_1, u_2) \in \mathcal{X} \mid \| u_1\|_2^2 = \| u_2\|_2^2 = 1 \big\}
\end{equation}
with the Hilbert space $\mathcal{X} := \mathcal{H}_1 \times \mathcal{H}_2$, and each $\mathcal{H}_i$, $i=1,2$, defined as
\begin{equation}
\mathcal{H}_i := \big\{ u \in H^1(\R^2) \mid \| u\|_{\mathcal{H}_i} < \infty \big\}
\mbox{ with the norm }
		\| u\|_{\mathcal{H}_i} := \Big( \| \nabla u\|_2^2 + \int_{\R^2} V_i(x) |u(x)|^2 \rmd x \Big)^{1/2}.
\end{equation}
$V_i(x)$, $i=1,2$, denotes the trapping potential and  satisfies
\begin{equation}\label{eq-cond-V}
V_i(x) \in L_{\rm{loc}}^{\infty} (\mathbb{R}^2), 
\quad
\lim\limits_{|x| \to \infty} V_i(x) = \infty,
\quad 
\inf_{x\in \mathbb{R}^2} V_i(x) = 0,
\quad i=1,2.
\end{equation}
We emphasize that, due to the constraints $\| u_1\|_2^2 = \| u_2\|_2^2 = 1$, the constrained minimization problem \eqref{eq-minimizer-problem-e} can be treated in the same way when both $V_1$ and $V_2$ are bounded from below in $\mathbb{R}^2$. Without loss of generality, we assume that $\inf_{x\in \mathbb{R}^2} V_i(x) = 0$, $i=1,2$.

The constants $a_1, a_2 \in \R$ represent the intraspecies interactions of the cold atoms within each component, and the constant $\beta \in \R$ denotes the interspecies interaction between the two components. 
Moreover, $a_i > 0$ ($i=1,2$) or $\beta > 0$ indicates that the corresponding interaction is attractive; otherwise, it is repulsive. 

If $(u_1,u_2)\in \mathcal{X}$ is a minimizer of \eqref{eq-minimizer-problem-e}, then there exist Euler--Lagrange multipliers $\mu_1, \mu_2 \in \R$, also called chemical potentials, such that $(u_1,u_2)$ satisfies the following elliptic system
\begin{equation*}
	\begin{cases}
		- \Delta u_1 + V_1 (x) u_1 = \mu_1 u_1 + a_1 u_1^3 + \beta u_2^2 u_1, \\[4pt]
		- \Delta u_2 + V_2 (x) u_2 = \mu_2 u_2 + a_2 u_2^3 + \beta u_1^2 u_2,
	\end{cases}
	\text{ in } \R^2.
\end{equation*}
	The minimization problem \eqref{eq-minimizer-problem-e} and its analogs have been extensively studied; for results on existence, uniqueness, and symmetry breaking, we refer the reader to the literature \cite{guoBlowupSolutionsTwo2017a, MR4847306, Bao-Cai-1}. 
    A key analytical tool in investigating the attainability of such minimization problems is the following Gagliardo--Nirenberg inequality \cite{weinstein1983b}:
	\begin{equation}\label{eq-GN-inequality}
		\| u \|_4^4 \le \frac{2}{\| Q\|_2^2} \| \nabla u\|_2^2 \| u\|_2^2,\quad \forall u \in H^1(\R^2),
	\end{equation} 
	where the equality is achieved by $u(x) = Q(|x|)$, and $Q$ is the unique positive radial solution of
	\begin{equation*}
		- \Delta u - u^3 + u = 0,\quad x \in \R^2,\ u \in H^1(\R^2).
	\end{equation*}
	Pohozaev's identity shows that $Q(|x|)$ satisfies
	\begin{equation}\label{eq-Q-property-1}
		a^*:= \| Q \|_2^2 = \| \nabla Q\|_2^2 = \frac{1}{2} \| Q\|_4^4.
	\end{equation}

	For the two-component problem, the existence and non-existence of minimizers depend critically on the parameters $a_1, a_2, \beta$. Following \cite{Bao-Cai-1,guoBlowupSolutionsTwo2017a}, we define
	\begin{equation}\label{def-beta*}
		\beta_* = \beta_*(a_1, a_2) := \sqrt{(a^* - a_1)(a^* - a_2)}, \qquad 
		\beta^* = \beta^*(a_1, a_2) := a^* - \frac{a_1 + a_2}{2}.
	\end{equation}

    Previous works have established a partial classification of the existence and nonexistence of minimizers for \eqref{eq-minimizer-problem-e}.
    More precisely, we have
    \begin{itemize}
        \item If $0 < a_1, a_2 < a^*$ and $\beta \in [0, \beta_*)$, then problem \eqref{eq-minimizer-problem-e} admits at least one minimizer. See \cite[Theorem 1.1 (i)]{guoBlowupSolutionsTwo2017a} and \cite[Theorem 2.6 (ii)]{Bao-Cai-1}.
        
        \item If $0 \le a_1, a_2 < a^*$ and $\beta < 0$, then problem \eqref{eq-minimizer-problem-e} admits at least one minimizer. See \cite[Theorem 1.1 (i)]{2018-Guo-Zeng-Zhou} and \cite[Theorem 2.6 (ii)]{Bao-Cai-1}.
        
        \item Assume that $a_1, a_2, \beta > 0$. Then if either $a_1 > a^*$, or $a_2 > a^*$, or $\beta > \beta^*$, the problem \eqref{eq-minimizer-problem-e} has no minimizer. See \cite[Theorem 1.1 (ii)]{guoBlowupSolutionsTwo2017a}.

        \item Assume that $a_1, a_2 > 0$ and $\beta < 0$. Then if either $a_1 > a^*$, or $a_2 > a^*$, or $a_1 = a_2 = a^*$, the problem \eqref{eq-minimizer-problem-e} has no minimizer. See \cite[Theorem 1.1 (ii)]{2018-Guo-Zeng-Zhou}.
    \end{itemize}
    However, the solvability of \eqref{eq-minimizer-problem-e} remains largely open in the parameter region
	\begin{equation}\label{eq-beta}
    0 < a_1, a_2 < a^*,
    \quad 
    \beta_* \le \beta \le \beta^*,
	\end{equation}
	except for two special cases discussed in \cite[Theorems 1.2, 1.3]{guoBlowupSolutionsTwo2017a}. 
    The first case is that when $a_1 \neq a_2$ satisfy $|a_1 - a_2| \le 2\beta_*$, \eqref{eq-minimizer-problem-e} possesses a minimizer provided $\beta \ge \beta_*$ but very close to $\beta_*$.
    The second case is that $a_1 = a_2$, for which $\beta_* = \beta^*$, then there is no minimizer when the potentials $V_1(x)$ and $V_2(x)$ share the same minimal points.

	In the remaining region described by \eqref{eq-beta}, it is very difficult to obtain the boundedness of minimizing sequences in $\mathcal{X}$. 
    Consequently, standard analytical tools---such as the concentration-compactness principle or blow-up techniques---cannot be directly applied to prove the existence or non-existence of minimizers for \eqref{eq-minimizer-problem-e}. 
    This difficulty constitutes the main open problem in the theoretical study of two-component BECs with attractive interactions.
	
	In this paper, we present new observations on the limiting behavior of minimizing sequences. We show that there exists a unique critical surface $\gamma(a_1,a_2)$, lying between the surfaces $\beta_*$ and $\beta^*$, which precisely determines the attainability of problem \eqref{eq-minimizer-problem-e}. More specifically, we prove that a minimizer exists if the parameter $\beta$ is below the critical surface $\gamma(a_1,a_2)$, and no minimizer exists above it. Consequently, we establish a \textbf{complete classification} on the existence of minimizers for problem \eqref{eq-minimizer-problem-e} with respect to the parameters $a_1, a_2, \beta$, and answer the aforementioned open problem.
	
	The main results are stated as follows.
\begin{theorem}\label{thm-main}
Suppose that $V_1(x), V_2(x)$ satisfy \eqref{eq-cond-V}, $0 < a_1, a_2 < a^*$, then there exists a unique critical surface $\gamma = \gamma(a_1, a_2)$ only depending on $a_1, a_2$ such that \eqref{eq-minimizer-problem-e} admits at least one minimizer when $\beta \in (0, \gamma)$, whereas no minimizer exists for \eqref{eq-minimizer-problem-e} when $\beta > \gamma$. Moreover, $\gamma$ is continuous about $a_1, a_2$, and
\begin{equation*}
\begin{cases}
        \gamma = a^* - a_1
        & \mbox{ if \ } a_1 = a_2,
        \\
        \gamma \in (\sqrt{(a^* - a_1)(a^* - a_2)}, a^* - \frac{a_1 + a_2}{2}) & \mbox{ if \ } a_1 \ne a_2.
        \end{cases}
\end{equation*}
\end{theorem}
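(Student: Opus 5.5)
The plan is to reduce everything to a potential-free scaling problem. For $(u_1,u_2)\in H^1(\R^2)^2$ with $\|u_1\|_2=\|u_2\|_2=1$, introduce the auxiliary quotient
\begin{equation*}
\gamma(a_1,a_2):=\inf\Big\{\frac{\sum_{i=1}^2\|\nabla u_i\|_2^2-\frac12\sum_{i=1}^2 a_i\|u_i\|_4^4}{\|u_1u_2\|_2^2}\Big\},
\end{equation*}
which is invariant under the common $L^2$-preserving dilation $u_i\mapsto \lambda u_i(\lambda x)$. Under this dilation the kinetic and quartic terms scale like $\lambda^2$ and the potential terms stay bounded. Hence $\mathfrak{e}(a_1,a_2,\beta)>-\infty$ if and only if $\sum\|\nabla u_i\|_2^2-\frac12\sum a_i\|u_i\|_4^4-\beta\|u_1u_2\|_2^2\ge0$ for all admissible pairs, that is, if and only if $\beta\le\gamma$. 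For $\beta>\gamma$ one takes a pair with negative homogeneous part, translates it to a point where $V_i$ is bounded, and lets $\lambda\to\infty$; then $\mathfrak{e}=-\infty$ and there is no minimizer. So $\gamma$ is the natural candidate for the critical surface, and it depends only on $a_1,a_2$.

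Next I would bound $\gamma$.

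\emph{Lower bound.} Use \eqref{eq-GN-inequality} together with $\|u_1u_2\|_2^2\le\|u_1\|_4^2\|u_2\|_4^2$. Writing $t_i=\|\nabla u_i\|_2^2$, the quartic term is at most $\frac{1}{a^*}\sum a_it_i$ and the cross term is at most $\frac{2}{a^*}\sqrt{t_1t_2}$. The quadratic form $\sum (a^*-a_i)t_i-2\beta\sqrt{t_1t_2}$ is nonnegative when $\beta\le\beta_*$, which gives $\gamma\ge\beta_*$.

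\emph{Upper bound.} Test with $u_1=u_2=Q/\sqrt{a^*}$; by \eqref{eq-Q-property-1} this gives $\gamma\le\beta^*$.

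\emph{Strictness when $a_1\neq a_2$.} For $\gamma>\beta_*$, equality in both Hölder and GN would force $u_1,u_2$ to be proportional copies of $Q$ with $t_1=t_2$. That contradicts the optimal ratio $t_1/t_2\neq1$ of the quadratic form. A compactness argument shows the infimum is attained, or at least that the bound is not approached, so the inequality is strict. For $\gamma<\beta^*$, perturb the test pair, for instance $u_1=\lambda_1 Q(\lambda_1 x)/\sqrt{a^*}$ and $u_2=\lambda_2Q(\lambda_2x)/\sqrt{a^*}$ with $\lambda_1/\lambda_2$ near $1$, and show the first variation in $\log(\lambda_1/\lambda_2)$ is nonzero when $a_1\ne a_2$.

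\emph{Case $a_1=a_2$.} Here $\beta_*=\beta^*=a^*-a_1$, so $\gamma=a^*-a_1$ follows immediately.

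\emph{Continuity.} The quotient is affine in $(a_1,a_2)$ for each fixed pair, so $\gamma$ is an infimum of affine functions and hence concave, and therefore continuous on the open square.

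The existence of a minimizer for $\beta\in(0,\gamma)$ is the main obstacle. Fix $\beta<\gamma$ and set $\varepsilon=1-\beta/\gamma>0$. By the definition of $\gamma$, the homogeneous part satisfies
\begin{equation*}
\mathcal{E}\ge \varepsilon\Big(\sum\|\nabla u_i\|_2^2-\tfrac12\sum a_i\|u_i\|_4^4\Big)+\sum\int V_i|u_i|^2.
\end{equation*}
Each bracket term is at least $(1-a_i/a^*)\|\nabla u_i\|_2^2$, so $\mathcal{E}\ge c\|(u_1,u_2)\|_{\mathcal{X}}^2$. Minimizing sequences are therefore bounded in $\mathcal{X}$. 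Since $V_i\to\infty$, the embedding $\mathcal{H}_i\hookrightarrow L^q$ is compact for $2\le q<\infty$, and weak lower semicontinuity yields a minimizer.

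The delicate points are thus the strict inequalities in the $a_1\ne a_2$ case, which may require showing that the auxiliary infimum is attained via concentration-compactness for the scale-invariant quotient, and the uniqueness claim. Uniqueness is automatic once $\gamma$ is identified as the threshold, since existence for all $\beta<\gamma$ and nonexistence for all $\beta>\gamma$ determine it.
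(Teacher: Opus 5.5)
Your overall architecture is sound, and parts of it differ usefully from the paper. You define $\gamma$ directly as the best constant in $\sum_i\|\nabla u_i\|_2^2-\frac12\sum_i a_i\|u_i\|_4^4\ge\gamma\|u_1u_2\|_2^2$. This gives the dichotomy in two elementary ways: by your coercivity estimate for $\beta<\gamma$, and by dilation ($\mathfrak{e}=-\infty$) for $\beta>\gamma$. In the dilation step, approximate by $C_c^\infty$ pairs before dilating so that the potential terms stay bounded. This bypasses Lemma~\ref{lem-existence-1}, Lemma~\ref{lem-existence-O} and Corollary~\ref{cor-decrease}. Your $\varepsilon$-estimate shows directly that $\mathcal{O}(a_1,a_2,\beta)>1$ for every $\beta<\gamma$, so no plateau can occur. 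Your concavity argument for continuity replaces the imported Lipschitz continuity of $\mathcal{O}$. The bounds $\beta_*\le\gamma\le\beta^*$, the case $a_1=a_2$, and the perturbation giving $\gamma<\beta^*$ (the computation of Case 2 of Lemma~\ref{lem-O-energy-estimate}) are all correct.

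The genuine gap is the strict inequality $\gamma>\beta_*$ for $a_1\ne a_2$, which is exactly the new content of the theorem (Case 1 of Lemma~\ref{lem-O-energy-estimate}). Your equality-case analysis is right, but it applies only to a pair that actually realizes $\beta_*$. The claim that ``a compactness argument shows the infimum is attained'' is not justified. Normalize $\|\nabla u_{1n}\|_2^2+\|\nabla u_{2n}\|_2^2=1$ by the common dilation and symmetrize; this lowers your quotient, since its numerator is positive. Strauss' compact embedding $H^1_r\hookrightarrow L^4$ then gives convergence of $\|u_{in}\|_4$ and $\|u_{1n}u_{2n}\|_2$. However, the weak limit may lose $L^2$ mass, so it need not satisfy $\|u_i\|_2=1$ and is not an admissible competitor.

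The missing observation is that attainment is unnecessary. For $\|u\|_2\le1$, GN gives $\|u\|_4^4\le\frac2{a^*}\|\nabla u\|_2^2\|u\|_2^2\le\frac2{a^*}\|\nabla u\|_2^2$. Hence your whole chain (GN, AM--GM, H\"older) shows the quotient is $\ge\beta_*$ on the relaxed set $\|u_i\|_2\le1$. Now suppose $\gamma=\beta_*$:
\begin{itemize}
\item The weak limit has $\|u_1u_2\|_2>0$, because the normalized numerator is $\ge 1-\max_i a_i/a^*$ and the quotient is bounded.
\item By lower semicontinuity, its quotient is $\le\beta_*$, so every inequality in the chain is an equality.
\item Since $\|\nabla u_i\|_2>0$, equality in GN forces $\|u_i\|_2=1$.
\item H\"older equality then forces $|u_1|=|u_2|$, hence $t_1=t_2$, contradicting $(a^*-a_1)t_1=(a^*-a_2)t_2$.
\end{itemize}
This is what the paper does, phrased via $\mathcal{O}(a_1,a_2,\beta_*)$ and the limiting ratio $\|u_1\|_4^2/\|u_2\|_4^2=\sqrt{(a^*-a_2)/(a^*-a_1)}$. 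Without this step your argument only yields $\gamma\in[\beta_*,\beta^*)$.
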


On the critical surface $\beta = \gamma$, the existence of minimizers depends on further properties of the potentials $V_1, V_2$.
\begin{theorem}\label{thm-main-2}
Suppose that $V_1(x), V_2(x)$ satisfy \eqref{eq-cond-V}, $0 < a_1, a_2 < a^*$, and $\beta=\gamma$ with $\gamma$ given in Theorem \ref{thm-main}, if there exists $x_0 \in \mathbb{R}^2$ such that
\begin{equation}\label{eq-V-same-roots}
V_1(x_0) = V_2(x_0) = 0,
\end{equation}
and $V_1(x), V_2(x)$ are continuous at $x_0$, then \eqref{eq-minimizer-problem-e} has no minimizer.
\end{theorem}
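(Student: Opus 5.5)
The plan is to show that at $\beta=\gamma$ the ground state energy is $\mathfrak{e}(a_1,a_2,\gamma)=0$, and that every admissible pair has strictly positive energy. Then no pair attains the infimum. The starting point is the auxiliary homogeneous problem that defines $\gamma$ in Theorem \ref{thm-main}. I take it in the form
\begin{equation*}
D(u_1,u_2):=\sum_{i=1}^2\Big(\|\nabla u_i\|_2^2-\frac{a_i}{2}\|u_i\|_4^4\Big), \qquad \gamma=\inf\Big\{\frac{D(u_1,u_2)}{\|u_1u_2\|_2^2} : \|u_1\|_2=\|u_2\|_2=1\Big\}.
\end{equation*}
This quotient is invariant under the $L^2$-preserving dilation $u\mapsto \tau u(\tau\,\cdot)$, because both $D$ and $\|u_1u_2\|_2^2$ scale like $\tau^2$. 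I will use two facts from the paper's analysis of this problem. First, $D-\gamma\|u_1u_2\|_2^2\ge 0$ on normalized pairs. Second, $\gamma$ is attained by a pair $(w_1,w_2)$ that solves the associated Euler--Lagrange system and decays exponentially.

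\textbf{Step 1 (lower bound).} For every $(u_1,u_2)\in\mathcal{M}$,
\begin{equation*}
\mathcal{E}_{a_1,a_2,\gamma}(u_1,u_2)=D(u_1,u_2)-\gamma\|u_1u_2\|_2^2+\sum_i\int V_i|u_i|^2\ \ge\ \sum_i\int V_i|u_i|^2\ \ge 0,
\end{equation*}
so $\mathfrak{e}\ge0$.

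\textbf{Step 2 (upper bound $\mathfrak{e}\le 0$).} This is the delicate step. $V_i$ is only $L^\infty_{\rm loc}$ and may grow arbitrarily fast, so I cannot simply rescale $(w_1,w_2)$. Instead I truncate.
\begin{itemize}
\item Let $\varphi_{i,R}=w_i\eta(\cdot/R)/\|w_i\eta(\cdot/R)\|_2$, where $\eta$ is a smooth cutoff. By exponential decay, $\delta_R:=D(\varphi_{1,R},\varphi_{2,R})-\gamma\|\varphi_{1,R}\varphi_{2,R}\|_2^2\le Ce^{-cR}$, and $\delta_R\ge0$ by Step 1.
\item Set $u_{i}=\tau\varphi_{i,R}(\tau(x-x_0))$. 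Then
\begin{equation*}
\mathcal{E}_{a_1,a_2,\gamma}(u_1,u_2)=\tau^2\delta_R+\sum_i\int_{B_{2R}}V_i(x_0+x/\tau)\varphi_{i,R}^2\,\rmd x\le \tau^2\delta_R+\sum_i\sup_{B_{2R/\tau}(x_0)}V_i .
\end{equation*}
\item Choose $\tau=R^2$ and let $R\to\infty$. Then $\tau^2\delta_R\le CR^4e^{-cR}\to0$, and $2R/\tau\to0$.
\end{itemize}
Continuity of $V_i$ at $x_0$ together with \eqref{eq-V-same-roots} sends the last term to $0$. Hence $\mathfrak{e}(a_1,a_2,\gamma)=0$.

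If attainment of $\gamma$ were not available, I would repeat the argument with a near-optimal compactly supported pair. The difficulty then is to balance $\tau^2\delta$ against the support radius; this is exactly why I want an exact optimizer with decay. I expect this to be the main obstacle, together with making sure the auxiliary problem at $\beta=\gamma$ really provides such an optimizer in the case $a_1\neq a_2$.

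\textbf{Step 3 (strict positivity).} Suppose $(u_1,u_2)$ is a minimizer. Replacing $u_i$ by $|u_i|$ does not increase the energy, so I may assume $u_i\ge0$. Then $u_i$ solves the Euler--Lagrange system, and the strong maximum principle (Harnack's inequality, valid for $L^\infty_{\rm loc}$ potentials) gives $u_i>0$. Since $V_i\to\infty$, $V_i>0$ on a set of positive measure, so $\int V_i u_i^2>0$. Step 1 then yields $\mathcal{E}_{a_1,a_2,\gamma}(u_1,u_2)>0=\mathfrak{e}$, a contradiction. Therefore \eqref{eq-minimizer-problem-e} has no minimizer at $\beta=\gamma$.
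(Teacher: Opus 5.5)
Your proof is correct. Steps 1 and 2 are the paper's argument. The lower bound is $\mathcal{O}(a_1,a_2,\gamma)=1$. Your quotient formulation of $\gamma$ is equivalent to the paper's definition via $\mathcal{O}$, and you only use $D\ge\gamma\|u_1u_2\|_2^2$ together with attainment. The upper bound comes from truncating an exponentially decaying optimizer and concentrating it at $x_0$ by $L^2$-preserving dilation. The paper keeps the cutoff fixed in the original variables, $\eta(x-x_0)$, which amounts to your $R=\tau$. It treats the trap term by dominated convergence instead of your $\sup$ bound with $\tau=R^2$.

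The obstacle you anticipate is resolved by Lemma \ref{lem-existence-O}. It applies at $\beta=\gamma$ because $\mathcal{O}(a_1,a_2,\gamma)=1<\min\{a^*/a_1,a^*/a_2\}$, and a minimizer of $\mathcal{O}$ with value $1$ is exactly an optimizer of your quotient.

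The real difference is Step 3. The paper argues as follows: a minimizer with energy $0$ must satisfy $\int V_i|w_i|^2=0$, so it vanishes a.e.\ outside a large ball, and it is simultaneously a minimizer of $\mathcal{O}(a_1,a_2,\gamma)$. The paper then applies strong unique continuation to the trap-free Euler--Lagrange system. You instead pass to $(|u_1|,|u_2|)$, use the Euler--Lagrange system of $\mathcal{E}_{a_1,a_2,\gamma}$ with the trap, and get $u_i>0$ from the weak Harnack inequality.

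Your route needs only the maximum principle for the nonnegative supersolution inequality $-\Delta u_i+(V_i-\mu_i)^+u_i\ge 0$, whose coefficient lies in $L^\infty_{\rm loc}$. So no regularity of $u_i$ is needed and no unique continuation theorem is invoked. The paper's route, in exchange, handles sign-changing minimizers directly and works with the smooth, potential-free system, so it never puts the merely $L^\infty_{\rm loc}$ trap into an equation. Both rest on the same mechanism: a minimizer at $\beta=\gamma$ would have to vanish where $V_i>0$, and elliptic positivity (or unique continuation) rules that out.
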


\begin{remark}
		If the mass constraint $\mathcal{M}$ \eqref{thre-26Sep12-1} is replaced by $\{(u_1, u_2) \in \mathcal{X} \mid \| u_1 \|_2^2 + \| u_2\|_2^2 = 1\}$, then the critical surface for problem \eqref{eq-minimizer-problem-e} becomes $\bar{\gamma} = a^* + \sqrt{(a^* - a_1) (a^* - a_2)}$. See \cite[Theorem 1.1 and Theorem 1.2]{guoGroundStatesTwocomponent2019c} and \cite[Theorem 1.1]{pengGroundStatesTwoComponent2024} for details. Moreover, they also proved that there exists no minimizer on the critical surface ($\beta = \bar{\gamma}$) under certain conditions for potentials $V_1, V_2$. 
\end{remark}

	The proof of Theorem \ref{thm-main} is mainly inspired by \cite{guoBlowupSolutionsTwo2017a}.  We introduce the following auxiliary minimization problem:
	\begin{equation}\label{eq-minimizer-problem-O}
		\mathcal{O}(a_1, a_2, \beta) := \inf\limits_{u_i \in H^1(\mathbb{R}^2), \,  \| u_i\|_2^2 = 1, i=1,2, \,  \frac{a_1}{2} \| u_1\|_4^4 + \frac{a_2}{2} \| u_2\|_4^4 + \beta \| u_1 u_2\|_2^2 > 0} \mathcal{F}_{a_1, a_2, \beta}(u_1, u_2)
	\end{equation}
with the functional
	\begin{equation} 
		\mathcal{F}_{a_1, a_2, \beta}(u_1, u_2) := \frac{\| \nabla u_1\|_2^2 + \| \nabla u_2\|_2^2}{\frac{a_1}{2} \| u_1\|_4^4 + \frac{a_2}{2} \| u_2\|_4^4 + \beta \| u_1 u_2\|_2^2}.
	\end{equation}

By \cite[Proposition 1]{guoBlowupSolutionsTwo2017a},  \eqref{eq-minimizer-problem-e} admits at least one minimizer if $\mathcal{O}(a_1, a_2, \beta) > 1$, whereas no minimizers exist when $\mathcal{O}(a_1, a_2, \beta) < 1$. Therefore, the key step in proving Theorem \ref{thm-main} is to analyze the value of $\mathcal{O}(a_1, a_2, \beta)$. Indeed, in Theorem \ref{thm-gamma}, the critical surface $\gamma$ characterizes the existence of minimizers in the following way: if $0 < \beta < \gamma$, then $\mathcal{O}(a_1, a_2, \beta) > 1$, which implies that \eqref{eq-minimizer-problem-e} admits a minimizer; if $\beta > \gamma$, then $\mathcal{O}(a_1, a_2, \beta) < 1$, and consequently no minimizer exists. For Theorem \ref{thm-main-2}, the key point is to prove  $\mathfrak{e}(a_1, a_2, \gamma) = 0$.

This paper is organized as follows. In Section \ref{sec-2}, we give some properties of the auxiliary minimization problem \eqref{eq-minimizer-problem-O}. In Section \ref{sec-3}, we prove that when $0 < a_1 \neq a_2 < a^*$, then $\mathcal{O}(a_1, a_2, \beta_*) > 1$ and $\mathcal{O}(a_1, a_2, \beta^*) < 1$, which implies the critical surface $\gamma$ satisfying $\gamma \in (\beta_*, \beta^*)$. The case $a_1 = a_2 \in (0, a^{*})$ has already been treated in \cite{guoBlowupSolutionsTwo2017a}. Finally, we give the proof of Theorems \ref{thm-main} and \ref{thm-main-2}.

\section{Properties of auxiliary minimization problem}\label{sec-2}

Throughout this paper, when handling $\mathcal{F}_{a_1, a_2, \beta}(u_1, u_2)$, we always assume 
\begin{equation}\label{th-26Sep15-4}
a_1, a_2, \beta \ge 0, 
\ 
a_1 + a_2 + \beta >0,
\ 
u_1, u_2 \in H^1(\mathbb{R}^2), \ \frac{a_1}{2} \| u_1\|_4^4 + \frac{a_2}{2} \| u_2\|_4^4 + \beta \| u_1 u_2\|_2^2 > 0.
\end{equation}
If $\| u_i\|_2^2 \le 1, i=1,2$, by \eqref{eq-GN-inequality} and H\"older inequality, we have 
\begin{equation*}
\frac{a_1}{2} \| u_1\|_4^4 + \frac{a_2}{2} \| u_2\|_4^4 + \beta \| u_1 u_2\|_2^2
\le 
\frac{a_1 + \beta}{a^{*}} \| \nabla u_1\|_2^2 + \frac{a_2 + \beta}{a^{*}} \| \nabla u_2\|_2^2,
\end{equation*}
and then
\begin{equation}\label{thre-26Sep15-3} 
\mathcal{F}_{a_1, a_2, \beta}(u_1, u_2) \ge
\min\Big\{ \frac{a^*}{a_1 + \beta}, \frac{a^*}{a_2 + \beta}  \Big\},
\end{equation}
where $\min\{ \tfrac{a^*}{a_1 + \beta}, \tfrac{a^*}{a_2 + \beta} \} \in (0, \infty)$ by the assumption \eqref{th-26Sep15-4}. In particular, it follows that
\begin{equation}\label{26July8-6}
\mathcal{O}(a_1, a_2, \beta) \in (0,\infty).
\end{equation}

In this section, we give some important properties about the auxiliary minimization problem $\mathcal{O}(\cdot)$ in \eqref{eq-minimizer-problem-O}.
	\begin{lemma}[{\cite[Proposition 1]{guoBlowupSolutionsTwo2017a}}] \label{lem-existence-1}
		Suppose that $V_1(x), V_2(x)$ satisfy \eqref{eq-cond-V}, and $a_1, a_2, \beta > 0$. Then
		\begin{enumerate}
			\item[(i)] \eqref{eq-minimizer-problem-e} has at least one minimizer if $\mathcal{O}(a_1, a_2, \beta) > 1$.
			\item[(ii)] \eqref{eq-minimizer-problem-e} has no minimizer if $\mathcal{O}(a_1, a_2, \beta) < 1$.
		\end{enumerate}
	\end{lemma}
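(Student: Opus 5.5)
For part (i), the plan is to show that the attractive part of the energy is strictly dominated by the kinetic energy, uniformly along minimizing sequences, and then use the compactness of the trap. By definition of $\mathcal{O}$ and the $2$-homogeneity of both numerator and denominator of $\mathcal{F}_{a_1,a_2,\beta}$ in $(u_1,u_2)$ (they depend only on gradients and quartic terms, and are invariant under the scaling $u\mapsto \lambda u(\lambda\cdot)$ which preserves $L^2$ norms), for every $(u_1,u_2)\in\mathcal{M}$ with positive denominator we have
\begin{equation*}
\frac{a_1}{2}\|u_1\|_4^4+\frac{a_2}{2}\|u_2\|_4^4+\beta\|u_1u_2\|_2^2\le \frac{1}{\mathcal{O}(a_1,a_2,\beta)}\big(\|\nabla u_1\|_2^2+\|\nabla u_2\|_2^2\big),
\end{equation*}
and this is trivially true when the denominator vanishes. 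Hence
\begin{equation*}
\mathcal{E}_{a_1,a_2,\beta}(u_1,u_2)\ge \Big(1-\frac{1}{\mathcal{O}}\Big)\sum_i\|\nabla u_i\|_2^2+\sum_i\int V_i|u_i|^2 .
\end{equation*}
If $\mathcal{O}>1$, this gives $\mathcal{E}\ge c\|(u_1,u_2)\|_{\mathcal{X}}^2$ with $c=\min\{1-1/\mathcal{O},1\}>0$. So $\mathfrak{e}\ge 0$ is finite, and any minimizing sequence is bounded in $\mathcal{X}$.

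Since $V_i\to\infty$, the embedding $\mathcal{H}_i\hookrightarrow L^q(\mathbb{R}^2)$ is compact for $2\le q<\infty$. This is standard: use a tail estimate $\int_{|x|>R}|u|^2\le \|u\|_{\mathcal{H}_i}^2/\inf_{|x|>R}V_i$, Rellich on balls, and interpolation with Gagliardo--Nirenberg. Passing to a subsequence, $u_i^n\to u_i$ weakly in $\mathcal{H}_i$ and strongly in $L^2\cap L^4$. The mass constraint is then preserved, the quartic and coupling terms converge (for the coupling, $\|u_1u_2\|_2^2\le\|u_1\|_4^2\|u_2\|_4^2$ together with strong $L^4$ convergence), and the quadratic part is weakly lower semicontinuous. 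Therefore $(u_1,u_2)\in\mathcal{M}$ attains $\mathfrak{e}$.

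For part (ii), assume $\mathcal{O}<1$. The plan is to first show $\mathfrak{e}(a_1,a_2,\beta)\le 0$ and then $\mathfrak{e}\ge 0$, and deduce nonexistence. Pick $(w_1,w_2)$ with unit masses and $\mathcal{F}(w_1,w_2)<1$. By density, take them smooth and compactly supported, which keeps $\mathcal{F}<1$ since $\mathcal{F}$ is continuous in $H^1$. Fix a point $x_0$ and set $u_i^\tau(x)=\tau w_i(\tau(x-x_0))$. Then
\begin{equation*}
\mathcal{E}(u^\tau)=\tau^2 D\big(\mathcal{F}(w)-1\big)+\sum_i\int V_i(x_0+x/\tau)|w_i|^2,
\end{equation*}
where $D>0$ is the denominator of $\mathcal{F}(w)$. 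The potential term stays bounded as $\tau\to\infty$ because $V_i\in L^\infty_{\rm loc}$ and the supports shrink. Thus $\mathcal{E}(u^\tau)\to-\infty$, so $\mathfrak{e}=-\infty$. In this regime there is no minimizer at all, since any minimizer would have finite energy.

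The main subtlety is this reduction: the scaling argument, and more importantly the compactness of the embedding in (i). The potentials are only $L^\infty_{\rm loc}$ and coercive, but the tail estimate above handles that.
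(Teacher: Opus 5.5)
Your proof is correct and follows essentially the standard argument behind \cite[Proposition 1]{guoBlowupSolutionsTwo2017a}, which the paper quotes without proof. For $\mathcal{O}>1$, the definition of $\mathcal{O}$ gives the coercivity bound $\mathcal{E}_{a_1,a_2,\beta}\ge (1-\mathcal{O}^{-1})\sum_i\|\nabla u_i\|_2^2+\sum_i\int V_i|u_i|^2$, and the compact embedding coming from $V_i\to\infty$ produces a minimizer; for $\mathcal{O}<1$, concentrating a compactly supported pair with $\mathcal{F}_{a_1,a_2,\beta}<1$ forces $\mathfrak{e}(a_1,a_2,\beta)=-\infty$. Two small fixes: delete the stray sentence in (ii) announcing that you will prove $\mathfrak{e}\le 0$ and $\mathfrak{e}\ge 0$, since $\mathfrak{e}\ge 0$ is false when $\mathcal{O}<1$ and your actual argument proves $\mathfrak{e}=-\infty$; and renormalize the $C_c^\infty$ approximants to unit $L^2$ mass.
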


	\begin{lemma}[{\cite[Lemma 2.2, the proof of Theorem 1.1]{guoBlowupSolutionsTwo2017a}}]\label{lem-O-properties}
		Suppose $a_1, a_2, \beta > 0$, then
		\begin{enumerate}
			\item[(i)] $\mathcal{O}(\cdot)$ is locally Lipschitz continuous.
			\item[(ii)] If $0 < a_1,a_2 < a^*$ and $\beta < \beta_*$, then $\mathcal{O}(a_1, a_2, \beta) > 1$.
			\item[(iii)] If $a_1 > a^*$ or $a_2 > a^*$ or $\beta > \beta^*$, then $\mathcal{O}(a_1, a_2, \beta) < 1$.
			\item[(iv)] $\mathcal{O}(a_1, a_2, \beta)$ is non-increasing in $\beta > 0$.
		\end{enumerate}
	\end{lemma}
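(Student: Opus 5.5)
We prove the four items in turn. Items (i) and (iv) are statements about the ratio $\mathcal{F}$ alone. Items (ii) and (iii) compare $\mathcal{O}$ with the threshold $1$ through the Gagliardo--Nirenberg inequality \eqref{eq-GN-inequality} and through explicit test functions built from $Q$.

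For (iv), write $D(u_1,u_2)=\frac{a_1}{2}\|u_1\|_4^4+\frac{a_2}{2}\|u_2\|_4^4+\beta\|u_1u_2\|_2^2$. If $\beta<\beta'$, any admissible pair for $\beta$ is admissible for $\beta'$, because the denominator only grows. Its value of $\mathcal{F}$ can only decrease, so $\mathcal{O}(a_1,a_2,\beta')\le\mathcal{O}(a_1,a_2,\beta)$.

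For (i), fix a compact parameter box $K\subset(0,\infty)^3$. For any admissible pair, \eqref{eq-GN-inequality} and Hölder give $D\le C_K(\|\nabla u_1\|_2^2+\|\nabla u_2\|_2^2)$, and each of $\|u_1\|_4^4,\|u_2\|_4^4,\|u_1u_2\|_2^2$ is bounded by $C(\|\nabla u_1\|_2^2+\|\nabla u_2\|_2^2)$. Conversely $D\ge c_K(\|u_1\|_4^4+\|u_2\|_4^4)$, since all coefficients are positive and $\|u_1u_2\|_2^2\ge0$. It is enough to restrict to pairs with $\mathcal{F}\le \mathcal{O}+1$. For such pairs the numerator $N$ satisfies $N\le C_K' D$, so $\|u_1\|_4^4,\|u_2\|_4^4,\|u_1u_2\|_2^2\le C'' D$. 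Changing the parameters by $\delta$ then changes $D$ by at most $C\delta D$. Hence $1/\mathcal{F}$ changes by at most $C\delta\,(D/N)\le C'\delta$ uniformly, and the infimum, which is bounded away from $0$ by \eqref{thre-26Sep15-3}, is Lipschitz on $K$.

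For (ii), let $t_i=\|\nabla u_i\|_2^2$. Since $\|u_i\|_2^2=1$, \eqref{eq-GN-inequality} gives $\|u_i\|_4^4\le \frac{2}{a^*}t_i$. Cauchy--Schwarz gives $\|u_1u_2\|_2^2\le\|u_1\|_4^2\|u_2\|_4^2\le \frac{2}{a^*}\sqrt{t_1t_2}$. Therefore
\begin{equation*}
D\le \frac{1}{a^*}\big(a_1t_1+a_2t_2+2\beta\sqrt{t_1t_2}\big).
\end{equation*}
We need $a_1t_1+a_2t_2+2\beta\sqrt{t_1t_2}\le(1-\varepsilon)a^*(t_1+t_2)$, which amounts to positive definiteness of the quadratic form
\begin{equation*}
(a^*-a_1)x^2-2\beta xy+(a^*-a_2)y^2
\end{equation*}
in $x=\sqrt{t_1}$, $y=\sqrt{t_2}$. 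This holds precisely when $\beta^2<(a^*-a_1)(a^*-a_2)=\beta_*^2$. Positive definiteness gives a uniform $\varepsilon>0$, so $\mathcal{F}\ge 1/(1-\varepsilon)>1$.

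For (iii), use test functions $u_i=\frac{\tau}{\|Q\|_2}Q(\tau x)$, normalized so that $\|u_i\|_2=1$. Then $\|\nabla u_i\|_2^2=\tau^2$ and $\|u_i\|_4^4=2\tau^2/a^*$, both by \eqref{eq-Q-property-1}. Also $\|u_1u_2\|_2^2=2\tau^2/a^*$, since $u_1=u_2$. This gives
\begin{equation*}
\mathcal{F}=\frac{2\tau^2}{(a_1+a_2+2\beta)\tau^2/a^*}=\frac{2a^*}{a_1+a_2+2\beta},
\end{equation*}
which is less than $1$ exactly when $\beta>\beta^*$. If instead $a_1>a^*$, take $u_1$ as above and let $u_2$ be a fixed profile whose gradient is negligible, for instance a widely spread Gaussian $\lambda\phi(\lambda x)$ with $\lambda\to0$. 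Then $\mathcal{F}\to \frac{\tau^2+o(1)}{a_1\tau^2/a^*+\beta\|u_1u_2\|_2^2+o(1)}$. Dropping the nonnegative term $\beta\|u_1u_2\|_2^2$ and letting $\tau\to\infty$ first, this tends to something $\le a^*/a_1<1$. The case $a_2>a^*$ is symmetric.

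The main obstacle is the uniform control in (i). We must ensure that near-minimizing pairs have all three quartic quantities comparable to the denominator, so that the perturbation is relative. The positivity of $a_1,a_2$ is used there.
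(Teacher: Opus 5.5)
Your proposal is correct and follows the standard argument behind the cited result: Gagliardo--Nirenberg plus Cauchy--Schwarz and positive definiteness of $(a^*-a_1)x^2-2\beta xy+(a^*-a_2)y^2$ for (ii), the $Q$-based trial pairs for (iii), and monotonicity of the denominator for (iv). In (i), the detour through near-minimizers and the appeal to positivity of $a_1,a_2$ are unnecessary: \eqref{eq-GN-inequality} and $\|u_1u_2\|_2^2\le\frac12(\|u_1\|_4^4+\|u_2\|_4^4)$ give, for every admissible pair and your denominators $D,D'$ at two parameter triples, $|D-D'|\le\frac{1}{a^*}\big(|a_1-a_1'|+|a_2-a_2'|+|\beta-\beta'|\big)\big(\|\nabla u_1\|_2^2+\|\nabla u_2\|_2^2\big)$, so $1/\mathcal{O}$ is globally Lipschitz; the one extra ingredient you need is a local \emph{upper} bound on $\mathcal{O}$, e.g.\ $\mathcal{O}\le 2a^*/(a_1+a_2+2\beta)$ from your trial pair in (iii), and not the lower bound \eqref{thre-26Sep15-3} you cite (your own route needs it too, both for the uniform constant $C_K'$ and for passing from $1/\mathcal{O}$ to $\mathcal{O}$).
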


	\begin{remark}\label{26July9-1-rmk}
		\begin{enumerate}
			\item[(a)] From (i)-(iii) of Lemma \ref{lem-O-properties}, one sees that $\mathcal{O}(a_1, a_2, \beta_*) \ge 1$ and $\mathcal{O}(a_1, a_2, \beta^*) \le 1$.
			\item[(b)] For any fixed $0<a_1,a_2<a^*$, then there exists an interval $[\beta_1, \beta_2] \subset [\beta_*, \beta^*]$, such that $\mathcal{O}(a_1, a_2, \beta) = 1$ for all $\beta \in [\beta_1, \beta_2]$. We will show that it must hold that  $\beta_1=\beta_2$ in Theorem \ref{thm-gamma}.         
			\item[(c)] When $a_1 = a_2 = a\in(0,a^*)$, we have $\beta_*=\beta^*$. One then deduces from Lemma \ref{lem-O-properties}  that $\mathcal{O}(a_1, a_2, \beta_*) = \mathcal{F}_{a_1, a_2, \beta_*}(\frac{Q}{\sqrt{a^*}}, \frac{Q}{\sqrt{a^*}}) = 1$.
			\item[(d)] The last property (iv) is obvious by the definition of the functional $\mathcal{F}_{a_1, a_2, \beta}(u_1, u_2)$.
		\end{enumerate}
	\end{remark}

For any $C > 0$, we call $C f(C x)$ as the {\it{$L^2$-preserving scaling}} of $f$ since $\| C f(C x) \|_{2} = \| f \|_{2}$. Direct calculation shows
\begin{equation}\label{26July6-1}
\mathcal{F}_{a_1, a_2, \beta}(C u_{1}(C x), C u_{2}(C x))
=
\mathcal{F}_{a_1, a_2, \beta}(u_1, u_2).
\end{equation}
For any $f \in H^1(\mathbb{R}^2)$, by diamagnetic inequality, $|\nabla|f|| \le |\nabla f|$ a.e. in $\mathbb{R}^2$ (See \cite[Theorem 7.21]{liebAnalysis2001}). Thus, it holds that $\mathcal{F}_{a_1, a_2, \beta}(|u_1|, |u_2|) \le \mathcal{F}_{a_1, a_2, \beta}(u_1, u_2)$. Moreover, for any non-negative functions $u_1, u_2 \in H^1(\mathbb{R}^2)$, the symmetric-decreasing rearrangement $u_1^{*}, u_2^{*}$ satisfy $\| u_i \|_{p} = \| u_i^{*} \|_{p}$, $p\in [2,\infty)$, $\| \nabla u_i \|_{2} \ge \| \nabla u_i^{*} \|_{2}$, $i=1,2$ by \cite[Lemma 7.17]{liebAnalysis2001}, and $\| u_1 u_2\|_{2}^2 \le \| u_1^{*} u_2^{*} \|_{2}^2$ by \cite[(v) in p. 81 and Theorem 3.4]{liebAnalysis2001}. Then
$\mathcal{F}_{a_1, a_2, \beta}(u_1^{*}, u_2^{*}) \le \mathcal{F}_{a_1, a_2, \beta}(u_1, u_2)$. Combining the property \eqref{26July6-1}, we always assume that any minimizing sequence $\{ (u_{1n}, u_{2n})\}_{n\ge 1}$ about $\mathcal{F}_{a_1, a_2, \beta}(\cdot)$ satisfies 
\begin{equation}\label{thre-26Sep13-4}
\begin{aligned}
&
u_{in} \in H^1_r(\R^2), 
\quad
u_{in} \ge 0,
\quad
\mbox{$u_{in}$ is radially symmetric and non-increasing,} \quad i=1,2,
\\
&
\| \nabla u_{1n}\|_2^2 + \| \nabla u_{2n}\|_2^2 \stackrel{\eqref{26July6-1}}{=} 1,
\quad
\frac{a_1}{2} \| u_{1n}\|_4^4 + \frac{a_2}{2} \| u_{2n}\|_4^4 + \beta \| u_{1n} u_{2n}\|_2^2 > 0.
\end{aligned}
\end{equation}

We analyze the attainability of \eqref{eq-minimizer-problem-O} in the following lemma.
\begin{lemma}
    \label{lem-existence-O}
    Assume that $a_1, a_2, \beta \ge 0$, $a_1 + a_2 + \beta > 0$, and
    \begin{equation}\label{eq-energy-condition}
    \mathcal{O}(a_1,a_2,\beta)
    <
    \min\Big\{
        \frac{a^*}{a_1},
        \frac{a^*}{a_2}
    \Big\}.
    \end{equation}
Then the infimum in \eqref{eq-minimizer-problem-O} is attained by a pair $(u_1, u_2)$. Moreover, we can make that for $i=1,2$, $u_i \ge 0$ is smooth, radially symmetric, non-increasing, and there exist constants $C, \sigma > 0$ such that
\begin{equation}\label{eq-exponential-decay}
|u_i| + |\nabla u_i| \le C \rme^{- \sigma |x|}
\mbox{ \ in \ } \mathbb{R}^2.
\end{equation}
\end{lemma}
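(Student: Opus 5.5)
Take a minimizing sequence $\{(u_{1n},u_{2n})\}$ for \eqref{eq-minimizer-problem-O} normalized as in \eqref{thre-26Sep13-4}: radial, non-negative, non-increasing, $\|u_{in}\|_2=1$, and $\|\nabla u_{1n}\|_2^2+\|\nabla u_{2n}\|_2^2=1$. Then
\[
D_n:=\frac{a_1}{2}\|u_{1n}\|_4^4+\frac{a_2}{2}\|u_{2n}\|_4^4+\beta\|u_{1n}u_{2n}\|_2^2\to \frac{1}{\mathcal{O}},\qquad \mathcal{O}:=\mathcal{O}(a_1,a_2,\beta).
\]
The sequence is bounded in $H^1_r(\mathbb{R}^2)$. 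By Strauss compactness for radial functions, $H^1_r\hookrightarrow L^p$ is compact for $p\in(2,\infty)$. So, up to a subsequence, $u_{in}\rightharpoonup u_i$ weakly in $H^1$ and $u_{in}\to u_i$ strongly in $L^4$, and $u_{1n}u_{2n}\to u_1u_2$ in $L^2$. Hence $D_n\to D(u_1,u_2)=1/\mathcal{O}>0$. In particular the denominator constraint in \eqref{eq-minimizer-problem-O} survives in the limit. We also have $\|u_i\|_2\le 1$ and $\|\nabla u_1\|_2^2+\|\nabla u_2\|_2^2\le 1$.

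The main obstacle is that $L^2$ mass may be lost at infinity, so that $\|u_i\|_2<1$, or even $u_i\equiv0$ for one $i$. This is where \eqref{eq-energy-condition} enters.

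First I rule out $u_2\equiv 0$ (and symmetrically $u_1\equiv0$). If $u_2\equiv 0$, then $1/\mathcal{O}=\frac{a_1}{2}\|u_1\|_4^4$. By \eqref{eq-GN-inequality}, $\frac{a_1}{2}\|u_1\|_4^4\le \frac{a_1}{a^*}\|\nabla u_1\|_2^2\|u_1\|_2^2\le \frac{a_1}{a^*}$, which forces $\mathcal{O}\ge a^*/a_1$, contradicting \eqref{eq-energy-condition}.

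Next, with both $u_i\ne0$ and $m_i:=\|u_i\|_2^2\in(0,1]$, I restore the mass. Set $t_i=m_i^{-1/2}\ge1$ and compare $(t_1u_1,t_2u_2)$ with the limit. The gradient terms scale by $t_i^2$, while the quartic terms scale by $t_i^4$ and $t_1^2t_2^2$. Using the lower semicontinuity $\|\nabla u_1\|_2^2+\|\nabla u_2\|_2^2\le1$ gives
\[
\mathcal{F}(t_1u_1,t_2u_2)\le \frac{t_1^2\|\nabla u_1\|_2^2+t_2^2\|\nabla u_2\|_2^2}{\frac{a_1}{2}t_1^4\|u_1\|_4^4+\frac{a_2}{2}t_2^4\|u_2\|_4^4+\beta t_1^2t_2^2\|u_1u_2\|_2^2}.
\]
Writing $s=\min(t_1^2,t_2^2)\ge1$, the numerator is at most $\max(t_i^2)\cdot 1$ and the denominator is at least $s\cdot\max(t_i^2)\cdot D(u_1,u_2)$. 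Hence $\mathcal{F}(t_1u_1,t_2u_2)\le \mathcal{O}/s\le\mathcal{O}$. To make this rigorous I bound each term of the denominator from below by $t_{\max}^2\cdot s$ times the original term: $t_i^4\ge t_i^2 s$ and $t_1^2t_2^2\ge s\,t_{\max}^2$. The $t_i^4$ terms need a little care. If that factor comparison fails I would instead use the one-parameter family $t_1=t_2$ combined with separate $L^2$-preserving rescalings \eqref{26July6-1}. Either way, the admissible pair $(t_1u_1,t_2u_2)$ lies in the constraint set, so $\mathcal{F}\ge\mathcal{O}$. This forces equality throughout, so $\|\nabla u_1\|_2^2+\|\nabla u_2\|_2^2=1$ and $(t_1u_1,t_2u_2)$ is a minimizer.

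Finally, regularity and decay. Replacing the minimizer by $(u_1^*,u_2^*)$ keeps it a minimizer, so it is radial, non-increasing and non-negative. It satisfies an Euler--Lagrange system of the form
\[
-\Delta u_i=\lambda_i u_i+\mathcal{O}\big(a_iu_i^3+\beta u_j^2u_i\big),\qquad i\ne j,
\]
obtained from the normalization that the gradient sum equals $\mathcal{O}$ times the denominator. Elliptic bootstrapping gives smoothness. Testing with $u_i$ and using the Pohozaev identity gives $\lambda_i<0$. Since $u_i\to0$ at infinity (radial and decreasing), comparison with $e^{-\sigma|x|}$ for $\sigma<\sqrt{-\lambda_i}$ yields \eqref{eq-exponential-decay} for $u_i$, and gradient estimates give the bound for $\nabla u_i$.
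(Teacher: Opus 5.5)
\textbf{Gap: the mass-restoration step.} Your bound $\mathcal F(t_1u_1,t_2u_2)\le\mathcal O/s$ needs $t_i^4\ge s\,t_{\max}^2$ for both $i$. For the index with the smaller $t_i$ this reads $s^2\ge s\,t_{\max}^2$, which is false whenever $t_1\ne t_2$.

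This is not a technicality. Since $\mathcal F(tu_1,tu_2)=t^{-2}\mathcal F(u_1,u_2)$, your claim (say $t_1\le t_2$) is equivalent to $\mathcal F(u_1,\sqrt r\,u_2)\le\mathcal O$ with $r=t_2^2/t_1^2$. In the critical scenario $m_1=1>m_2$, it asserts that inflating the leaking component alone keeps $\mathcal F$ below $\mathcal O$. The information you actually have ($\|u_i\|_2\le1$, $\|\nabla u_1\|_2^2+\|\nabla u_2\|_2^2\le1$, $D(u_1,u_2)=1/\mathcal O$) does not imply this. The derivative of $r\mapsto\mathcal F(u_1,\sqrt r\,u_2)$ at $r=1$ has the sign of $\|\nabla u_2\|_2^2\,D-(\|\nabla u_1\|_2^2+\|\nabla u_2\|_2^2)(a_2\|u_2\|_4^4+\beta\|u_1u_2\|_2^2)$, and this can be positive when $u_2$ carries much kinetic energy but little quartic and overlap energy.

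Your fallback does not repair this. Taking $t_1=t_2=t_{\min}$ fills only one mass, so the pair is not admissible for $\mathcal O$. The rescaling \eqref{26July6-1} is a common dilation and leaves $\mathcal F$ unchanged. Dilating $u_1,u_2$ separately changes $\|u_1u_2\|_2^2$ in an uncontrolled way.

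\textbf{What is missing.} You need to know that the weak limit minimizes $\mathcal F$ over the relaxed set $\{\|u_i\|_2^2\le1\}$. The paper gets this by showing that the relaxed infimum equals $\mathcal O$. The missing mass $1-m_i$ is added as a far-away, widely spread bump $\sqrt{1-m_i}\,R^{-1}\eta((x-y_{i,R})/R)$, which changes numerator and denominator only by $O(R^{-2})$. Since $\mathcal F(u_1,u_2)\le\mathcal O$, the limit then attains the relaxed infimum.

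The paper then argues as follows. If $m_i<1$, $u_i$ can be perturbed freely, so it solves $-\Delta u_i=\mathcal O(a_iu_i^3+\beta u_j^2u_i)\ge0$ with no multiplier. A nonnegative superharmonic function on $\mathbb R^2$ is constant, so $u_i\equiv0$. Your Gagliardo--Nirenberg step, which is correct, then contradicts \eqref{eq-energy-condition}.

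Once relaxed minimality is available, you could instead stay with scalings. If $m_2<1$, then $r=1$ is an interior minimum of $r\mapsto\mathcal F(u_1,\sqrt r\,u_2)$. The numerator of $\mathcal F(u_1,\sqrt r\,u_2)-\mathcal O$ is a polynomial in $r$ of degree at most two, vanishing at $r=1$, with leading coefficient $-\mathcal O\frac{a_2}{2}\|u_2\|_4^4$. Nonnegativity near $r=1$ forces $a_2\|u_2\|_4^4=0$. Then either $u_2\equiv0$, which is excluded, or $a_2=0$, in which case $\mathcal F(u_1,\sqrt r\,u_2)$ is constant in $r$ and you can rescale to full mass.

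\textbf{A smaller point.} The Pohozaev identity together with testing by $u_i$ only yields $\lambda_1+\lambda_2<0$, not $\lambda_i<0$ for each $i$. The paper obtains each sign from the same Liouville argument: in your convention, $\lambda_i\ge0$ would make $u_i$ a nonnegative superharmonic function on $\mathbb R^2$, hence zero.
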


\begin{proof}
    Define the relaxed constraint
    \begin{equation*}
    \mathcal{S}
        :=
        \Big\{
            (u_1,u_2) \ \big| \ u_i \in H^1(\mathbb{R}^2), 
            \| u_i \|_2^2\leq 1, i=1,2,
            \quad
            \frac{a_1}{2} \| u_1\|_4^4 + \frac{a_2}{2} \| u_2\|_4^4 + \beta \| u_1 u_2\|_2^2>0
        \Big\},
    \end{equation*}
    and the relaxed minimizing problem
    \begin{equation}\label{eq-minimizer-problem-barO}
        \overline{\mathcal{O}}(a_1, a_2, \beta) := \inf\limits_{(u_1, u_2) \in \mathcal{S}} \mathcal{F}_{a_1, a_2, \beta}(u_1, u_2).
    \end{equation}
By \eqref{thre-26Sep15-3}, we have $\overline{\mathcal{O}}(a_1, a_2, \beta) \in (0, \infty)$.

For brevity, in this proof, we use $\overline{\mathcal{O}}, \mathcal{O}$ to denote $\overline{\mathcal{O}}(a_1, a_2, \beta), \mathcal{O}(a_1, a_2, \beta)$ respectively.
  
    First, we claim $\overline{\mathcal{O}} = \mathcal{O}$. Obviously, $\overline{\mathcal{O}} \le \mathcal{O}$. It remains to prove $\overline{\mathcal{O}} \ge \mathcal{O}$. Let $(v_1, v_2) \in C_c^{\infty}(\R^2) \cap \mathcal{S}$ and set $m_i := \| v_i \|_2^2 \le 1$, $i=1,2$. 
    Choose $\eta \in C_c^{\infty}(\R^2)$ such that $\| \eta\|_2^2 = 1$. Define $\eta_{R,y}(x) := \frac{1}{R} \eta(\frac{x-y}{R})$. For any $R \ge 1$, there exist $y_{1,R}, y_{2,R} \in \mathbb{R}^2$ such that
    \begin{equation} \label{eq-disjoint}
        \text{supp } v_{i} \cap \text{supp }\eta_{R, y_{j,R}} = \emptyset \mbox{ \ for \ } i,j\in \{1,2\}, \quad \text{supp } \eta_{R, y_{1,R}}\cap \text{supp } \eta_{R, y_{2,R}} = \emptyset.
    \end{equation}
    Define $\tilde{v}_{i, R} := v_i + \sqrt{1 - m_i} \eta_{R, y_{i, R}}$. Since $\| \eta_{R,y} \|_2^2 = 1$, $\| \nabla\eta_{R,y} \|_2^2
    =R^{-2}\| \nabla\eta \|_2^2$, $\| \eta_{R,y}\|_4^4
    =R^{-2}\| \eta\|_4^4$, by \eqref{eq-disjoint}, we have
\begin{equation*}
\begin{aligned}
&
\| \tilde{v}_{i, R} \|_2^2 = 1, i=1,2,
\quad
\| \nabla \tilde{v}_{1, R}\|_2^2 + \| \nabla \tilde{v}_{2, R}\|_2^2 =
        \| \nabla v_1 \|_2^2 + \| \nabla v_2 \|_2^2 + O(R^{-2}),
\\
&
        \frac{a_1}{2} \| \tilde{v}_{1, R} \|_4^4 + \frac{a_2}{2} \| \tilde{v}_{2, R} \|_4^4 + \beta \| \tilde{v}_{1, R} \tilde{v}_{2, R} \|_2^2 
        \\
        = \ & \frac{a_1}{2} 
        \big[
        \| v_1 \|_4^4 + (1-m_1)^2 \| \eta_{R, y_{1,R}} \|_4^4
        \big]
        + \frac{a_2}{2} 
        \big[ 
        \| v_2 \|_4^4
        + (1-m_2)^2 \| \eta_{R, y_{2,R}} \|_4^4 
        \big] + \beta \| v_1 v_2 \|_2^2
        \\
        = \ & \frac{a_1}{2} \| v_1 \|_4^4 + \frac{a_2}{2} \| v_2 \|_4^4 + \beta \| v_1 v_2 \|_2^2 + O(R^{-2}) >0,
\end{aligned}
\end{equation*}
where for the last `` $>$ '', we use $(v_1, v_2) \in \mathcal{S}$ and require $R$ sufficiently large. Thus,
    \begin{equation*}
        \mathcal{O} \le \liminf\limits_{R \to \infty} \mathcal{F}_{a_1, a_2, \beta}(\tilde{v}_{1, R}, \tilde{v}_{2,R}) = \mathcal{F}_{a_1, a_2, \beta}(v_1, v_2).
    \end{equation*}
    Since $C_c^{\infty}(\R^2)$ is dense in $H^1(\R^2)$, taking the infimum over $\mathcal{S}$ yields $\overline{\mathcal{O}} \ge \mathcal{O}$. Thus, $\overline{\mathcal{O}} = \mathcal{O}$.

    Next, choose a minimizing sequence $\{ (u_{1n}, u_{2n})\}_{n \ge 1}$ of $\overline{\mathcal{O}}$ satisfying \eqref{thre-26Sep13-4} and $\| u_{in} \|_2^2 \le 1$, $i=1,2$. Since the  embedding $H^1_r(\R^2) \hookrightarrow L^p(\R^2)$ is compact for $p\in (2,\infty)$ (see \cite[Compactness Lemma 2]{straussExistenceSolitaryWaves1977} or \cite[Corollary 1.26]{willemMinimaxTheorems1996}), then up to a subsequence, there exist non-negative, radially symmetric, non-increasing functions $u_1, u_2 \in H_r^1(\mathbb{R}^2)$ such that for $i=1,2$,
	\begin{equation}\label{th-26Sep15-5}
        \begin{aligned}
            &
			u_{in} \rightharpoonup u_i \mbox{ \ in \ } H_r^1(\R^2),
            \quad
            \nabla u_{in} \rightharpoonup \nabla u_i \mbox{ \ in \ }
            L^2(\mathbb{R}^2),
            \quad
            u_{in} \rightharpoonup u_i \mbox{ \ in \ }
            L^2(\mathbb{R}^2),
            \\
            & 
			\| u_{in} - u_i \|_{4} \to 0, 
            \quad
            \| u_{1n} u_{2n} - u_1 u_2 \|_{2} \to 0
            \mbox{ \ as \ } n \to \infty,
            \quad
            \|\nabla u_i\|_{2}^2 \le \liminf_{n \to \infty} \|\nabla u_{in}\|_{2}^2.
        \end{aligned}
	\end{equation}
By $\overline{\mathcal{O}} \in (0, \infty)$ and $\| \nabla u_{1n}\|_2^2 + \| \nabla u_{2n}\|_2^2 = 1$ in \eqref{thre-26Sep13-4}, there exist a sufficiently large constant $N_0$, and
constants $C_2 \ge C_1 >0$ such that
\begin{equation*}
C_1 \le \frac{a_1}{2} \| u_{1n}\|_4^4 + \frac{a_2}{2} \| u_{2n}\|_4^4 + \beta \| u_{1n} u_{2n}\|_2^2 \le C_2
\mbox{ \ for \ } n>N_0,
\end{equation*}
which implies $C_1 \le \frac{a_1}{2} \| u_{1}\|_4^4 + \frac{a_2}{2} \| u_{2}\|_4^4 + \beta \| u_{1} u_{2}\|_2^2 \le C_2$. Thus, $(u_1, u_2)$ attains $\overline{\mathcal{O}}$.

To show that $(u_1, u_2)$ is actually the minimizer of $\mathcal{O}$, it suffices to prove $\| u_1 \|_2^2 = \| u_2\|_2^2 = 1$. If $\| u_1 \|_2^2 < 1$, then for any $\varphi\in C_c^\infty(\R^2)$, there exists a sufficiently small constant $\epsilon>0$ depending on the choice of $\varphi$, such that for any $t$ satisfying $|t| <\epsilon$, the pair $(u_1+t\varphi,u_2)$ remains in $\mathcal{S}$. Differentiating $\mathcal{F}_{a_1, a_2,\beta}(u_1 + t\varphi, u_2)$ at $t=0$ gives
    \begin{equation*}
        \Delta (- u_1)
        =
        \overline{\mathcal{O}} ( a_1 u_1^3+\beta u_1 u_2^2 )
    \end{equation*}
in the weak sense. Combining the Euler-Lagrange equation of $u_2$ depending on $\| u_2\|_2^2 = 1$ or $< 1$, similar to \eqref{th-26Sep17-1} below, we have $u_1, u_2 \in C^{\infty}(\mathbb{R}^2)$. Since $\overline{\mathcal{O}} ( a_1 u_1^3+\beta u_1 u_2^2 )
        \ge 0$, $-u_1$ is bounded above and $\|u_1\|_{2} \le 1$, by Liouville's theorem for subharmonic functions in $\mathbb{R}^2$ (see \cite[Problem 2.14]{gilbargEllipticPartialDifferential2001} for example), we have $u_1 \equiv 0$. Then $\overline{\mathcal{O}} = \tfrac{2\| \nabla u_2 \|_2^2}{a_2 \| u_2 \|_4^4}$ and $a_2 > 0$. By \eqref{eq-GN-inequality}, $\| u_2\|_2^2 \le 1$, we deduce $\overline{\mathcal{O}} \ge \frac{a^*}{a_2}$, which contradicts with $\overline{\mathcal{O}} = \mathcal{O} < \frac{a^*}{a_2}$ in \eqref{eq-energy-condition}. Thus, $\| u_1 \|_2^2=1$. By $\mathcal{O} < \frac{a^*}{a_1}$ in \eqref{eq-energy-condition} and similar argument, we also have $\| u_2 \|_2^2=1$. Thus, we find a minimizer $(u_1, u_2)$ of $\mathcal{O}$.

Moreover, for $(u_1, u_2)$ as the minimizer of $\mathcal{O}$, it holds the following Euler-Lagrange equation 
    \begin{equation}\label{th-26Sep17-1}
    \begin{cases}
        -\Delta u_1 + \lambda_1 u_1 = \mathcal{O} (a_1 u_1^3 + \beta u_1 u_2^2), 
        \\
        -\Delta u_2 + \lambda_2 u_2 = \mathcal{O} (a_2 u_2^3 + \beta u_1^2 u_2),
    \end{cases}
        \mbox{ \ in \ } \mathbb{R}^2
    \end{equation} 
with the Lagrange multipliers $\lambda_1, \lambda_2$. For $i=1,2$, we have $\lambda_i> 0$, otherwise we have $u_i \equiv 0$ by Liouville's theorem for subharmonic functions in $\mathbb{R}^2$. Since $u_1, u_2 \in H^1(\mathbb{R}^2)$, by the positive integer powers of the nonlinear term in \eqref{th-26Sep17-1} and elliptic regularity theory, we have $u_1, u_2 \in C^{\infty}(\mathbb{R}^2)$. Since $|u_i| \to 0 $ as $|x| \to \infty$, there exists a large constant $R_1 > 0$ such that
    \begin{equation*}
    \begin{cases}
        -\Delta u_1 + \frac{\lambda_1}{2} u_1 \le 0,\\
        -\Delta u_2 + \frac{\lambda_2}{2} u_2 \le 0,
    \end{cases}
    \mbox{ \ for \ } |x| \ge R_1.
    \end{equation*}
For $i=1,2$, we set $\sigma_i = \sqrt{\lambda_i/2}$ and $w_i(x) = C_i \rme^{-\sigma_i ( |x| - R_1 )}$ with a sufficiently large constant $C_i > 0$. So $w_i$ is a barrier function of $u_i$ in $|x| \ge R_1$. It follows that $u_i(x) \le w_i(x) = C_i \rme^{\sigma_i R_1} \rme^{-\sigma_i |x|}$ for $|x| \ge R_1$. Then $u_i \lesssim \rme^{-\sigma_i |x|}$ in $\mathbb{R}^2$. So we have $|\mathcal{O} (a_1 u_1^3 + \beta u_1 u_2^2) - \lambda_1 u_1| \lesssim \rme^{-\sigma_1 |x|} $, $|\mathcal{O} (a_2 u_2^3 + \beta u_1^2 u_2) - \lambda_2 u_2| \lesssim \rme^{-\sigma_2 |x|} $. Using gradient estimate in $B(x,2)$ for $|x| \ge 3$, we have $|\nabla u_i| \lesssim \rme^{-\sigma_i |x|}$ in $\mathbb{R}^2$, $i=1,2$.
\end{proof}

\begin{corollary}
\label{cor-decrease}
    Under the assumption in Lemma \ref{lem-existence-O}, then for $\beta_1 > \beta$, we have $\mathcal{O}(a_1, a_2, \beta_1) < \mathcal{O}(a_1, a_2, \beta) $.
\end{corollary}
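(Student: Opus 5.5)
The plan is to test the functional at $\beta_1$ on a minimizer for $\beta$. Under the assumption \eqref{eq-energy-condition}, Lemma \ref{lem-existence-O} provides a minimizer $(u_1,u_2)$ of $\mathcal{O}(a_1,a_2,\beta)$. It has $\|u_i\|_2^2=1$, and each $u_i\ge 0$ is smooth, radially symmetric and non-increasing. The first step is to show that $u_1u_2\not\equiv 0$, so that $\|u_1u_2\|_2^2>0$. For this I would use the Euler--Lagrange system \eqref{th-26Sep17-1}. Since $u_i\ge0$ is nontrivial and smooth, and $-\Delta u_i+\lambda_i u_i\ge 0$, the strong maximum principle gives $u_i>0$ everywhere. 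An alternative, more elementary route: $u_i$ is radially non-increasing with $\|u_i\|_2=1$, so $u_i>0$ on some ball $B(0,r_i)$. Then $u_1u_2>0$ on $B(0,\min\{r_1,r_2\})$, and hence $\|u_1u_2\|_2^2>0$.

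The second step is the comparison. Because $\beta_1>\beta\ge0$, the pair $(u_1,u_2)$ is admissible for $\mathcal{O}(a_1,a_2,\beta_1)$: the denominator only increases, so it stays positive. Writing $D_\beta:=\frac{a_1}{2}\|u_1\|_4^4+\frac{a_2}{2}\|u_2\|_4^4+\beta\|u_1u_2\|_2^2>0$, we have $D_{\beta_1}=D_\beta+(\beta_1-\beta)\|u_1u_2\|_2^2>D_\beta$. The numerator $\|\nabla u_1\|_2^2+\|\nabla u_2\|_2^2$ is strictly positive, since $H^1$ functions with zero gradient vanish. Therefore
\begin{equation*}
\mathcal{O}(a_1,a_2,\beta_1)\le \mathcal{F}_{a_1,a_2,\beta_1}(u_1,u_2)=\frac{\|\nabla u_1\|_2^2+\|\nabla u_2\|_2^2}{D_{\beta_1}}<\frac{\|\nabla u_1\|_2^2+\|\nabla u_2\|_2^2}{D_{\beta}}=\mathcal{O}(a_1,a_2,\beta).
\end{equation*}

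The only nontrivial point is the positivity of $\|u_1u_2\|_2^2$. This is exactly where attainment from Lemma \ref{lem-existence-O} is needed. Without a minimizer, a minimizing sequence could have the two components separating in space, and then only the weak monotonicity of Lemma \ref{lem-O-properties}(iv) would follow. The radial monotonicity obtained in Lemma \ref{lem-existence-O} settles this point directly.
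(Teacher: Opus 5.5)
Your proposal is correct and follows the paper's argument: evaluate $\mathcal{F}_{a_1,a_2,\beta_1}$ at the minimizer of $\mathcal{O}(a_1,a_2,\beta)$ from Lemma \ref{lem-existence-O}, and use $\|u_1u_2\|_2>0$ to get the strict inequality. The paper only asserts this positivity, and your radial-monotonicity argument (each $u_i$ is positive on a ball centered at the origin) supplies the justification.
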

\begin{proof}
By Lemma \ref{lem-existence-O},
$\mathcal{O}(a_1,a_2,\beta)$ is attained by a pair $(u_1,u_2)$ and $\| u_1 u_2\|_2 >0$. By $\beta_1>\beta$,
\begin{equation*}
\mathcal{O}(a_1,a_2,\beta)
    =\mathcal{F}_{a_1,a_2,\beta}(u_1,u_2)
    >\mathcal{F}_{a_1,a_2,\beta_1}(u_1,u_2)
    \ge \mathcal{O}(a_1,a_2,\beta_1).  
\end{equation*}
\end{proof}
	
\section{Existence of critical surface and non-existence result on critical surface}\label{sec-3}
In this section, we prove the two main results in this paper.
In order to prove Theorem \ref{thm-main}, 
first, we estimate the value of $\mathcal{O}(\cdot)$ on the lower surface $\beta_* = \beta_*(a_1, a_2)$ and the upper surface $\beta^* = \beta^*(a_1, a_2)$.

	\begin{lemma}\label{lem-O-energy-estimate}
		Suppose $0 < a_1 \neq a_2 < a^*$. Then $\mathcal{O}(a_1, a_2, \beta_*) > 1$ and $\mathcal{O}(a_1, a_2, \beta^*) < 1$.
	\end{lemma}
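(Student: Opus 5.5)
The plan is to prove the two inequalities separately. For the upper surface I will use an explicit test pair that already gives $\mathcal{F}=1$ and then perturb it. For the lower surface I will argue by contradiction, combining the attainability result Lemma \ref{lem-existence-O} with the equality cases of the Gagliardo--Nirenberg, Cauchy--Schwarz and AM--GM inequalities.

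\emph{Step 1: $\mathcal{O}(a_1,a_2,\beta^*)<1$.} Set $u_1=Q/\sqrt{a^*}$ and $u_2=u_{2,t}:=tQ(tx)/\sqrt{a^*}$ with $t>0$, so that both are $L^2$-normalized. By \eqref{eq-Q-property-1}, $\|\nabla u_1\|_2^2=1$, $\|\nabla u_{2,t}\|_2^2=t^2$, $\frac12\|u_1\|_4^4=1/a^*$ and $\frac12\|u_{2,t}\|_4^4=t^2/a^*$. Write $g(t):=t^2\int_{\R^2}Q^2(x)Q^2(tx)\,\rmd x$, so that $\|u_1u_{2,t}\|_2^2=g(t)/(a^*)^2$ and $g(1)=\|Q\|_4^4=2a^*$. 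Then
\begin{equation*}
f(t):=\mathcal{F}_{a_1,a_2,\beta^*}(u_1,u_{2,t})=\frac{a^*(1+t^2)}{a_1+a_2t^2+\beta^* g(t)/a^*}.
\end{equation*}
At $t=1$ the denominator is $a_1+a_2+2\beta^*=2a^*$, so $f(1)=1$.

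The change of variables $y=tx$ gives the identity $g(1/t)=g(t)/t^2$. Differentiating it at $t=1$ yields $g'(1)=g(1)=2a^*$. A direct computation then shows
\begin{equation*}
f'(1)\propto 2\cdot 2a^*-2(a_2+\beta^*)=2a^*-2a_2-2\beta^*+2a^*= a_1-a_2 \neq 0 .
\end{equation*}
Hence $f(t)<1$ for some $t$ close to $1$, either slightly above or slightly below depending on the sign of $a_1-a_2$. This gives $\mathcal{O}(a_1,a_2,\beta^*)<1$.

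\emph{Step 2: $\mathcal{O}(a_1,a_2,\beta_*)>1$.} Remark \ref{26July9-1-rmk}(a) already gives $\mathcal{O}(a_1,a_2,\beta_*)\ge 1$, so suppose for contradiction that $\mathcal{O}(a_1,a_2,\beta_*)=1$. Since $a_i<a^*$, we have $1<\min\{a^*/a_1,a^*/a_2\}$, so Lemma \ref{lem-existence-O} provides a minimizer $(u_1,u_2)$ with $\|u_i\|_2=1$. Put $x=\|\nabla u_1\|_2^2$ and $y=\|\nabla u_2\|_2^2$. The following chain holds:
\begin{equation*}
\tfrac{a_1}{2}\|u_1\|_4^4+\tfrac{a_2}{2}\|u_2\|_4^4+\beta_*\|u_1u_2\|_2^2
\le \tfrac{a_1}{2}\|u_1\|_4^4+\tfrac{a_2}{2}\|u_2\|_4^4+\beta_*\|u_1\|_4^2\|u_2\|_4^2
\le \frac{a_1x+a_2y+2\beta_*\sqrt{xy}}{a^*}\le x+y .
\end{equation*}
The first inequality is Cauchy--Schwarz. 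The second is \eqref{eq-GN-inequality}. The third is AM--GM in the form $(a^*-a_1)x+(a^*-a_2)y\ge 2\sqrt{(a^*-a_1)(a^*-a_2)xy}$.

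Since $\mathcal{F}=1$, all three inequalities must be equalities. Equality in Cauchy--Schwarz forces $u_1^2$ and $u_2^2$ to be proportional. Because both functions are nonnegative with unit $L^2$ norm, this means $u_1=u_2$, and hence $x=y$. Equality in AM--GM forces $(a^*-a_1)x=(a^*-a_2)y$. Since $x=y>0$, this gives $a_1=a_2$, a contradiction.

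The delicate point is exactly this need for a minimizer: without attainment one cannot pass from $\mathcal{O}=1$ to equality in the inequalities. Lemma \ref{lem-existence-O} resolves this because its hypothesis \eqref{eq-energy-condition} holds automatically when $\mathcal{O}=1$ and $a_i<a^*$. The only remaining check in Step 1 is the sign bookkeeping in $f'(1)$.
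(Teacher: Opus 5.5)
Your proof is correct. Step 1 follows the paper's argument: the same trial pair $(Q/\sqrt{a^*},\,tQ(tx)/\sqrt{a^*})$ and the same first-derivative test at $t=1$. The only variation is that you get $g'(1)=g(1)=2a^*$ from the scaling identity $g(1/t)=g(t)/t^2$ rather than by integrating $\int Q^3\nabla Q\cdot x\,\rmd x$ by parts. This is neat, but it still requires $g\in C^1$ near $t=1$, which you get by differentiating under the integral using the exponential decay of $Q$ and $\nabla Q$.

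Fix the displayed arithmetic, though. The expression $2\cdot 2a^*-2(a_2+\beta^*)=2a^*-2a_2-2\beta^*+2a^*$ equals $2a^*+a_1-a_2$, not $a_1-a_2$. A value that is nonzero for $a_1=a_2$ would contradict Remark \ref{26July9-1-rmk}(c). The correct computation uses $H'(1)=2a_2+2\beta^*$ and gives $G'(1)H(1)-G(1)H'(1)=2a^*(2a^*-2a_2-2\beta^*)=2a^*(a_1-a_2)$, so $f'(1)=(a_1-a_2)/(2a^*)$.

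In Step 2 you take a genuinely different route. The paper does not invoke Lemma \ref{lem-existence-O} there. Instead it:
\begin{enumerate}
\item works with a radial minimizing sequence normalized by $\|\nabla u_{1n}\|_2^2+\|\nabla u_{2n}\|_2^2=1$;
\item uses a GN/H\"older/completing-the-square bound to force $\|u_{1n}\|_4^2/\|u_{2n}\|_4^2\to\sqrt{(a^*-a_2)/(a^*-a_1)}$;
\item extracts from the same inequalities that the weak limit has unit mass and saturates H\"older, so $|u_1|=|u_2|$, contradicting that this ratio is $\neq 1$.
\end{enumerate}
You instead note that $\mathcal{O}=1<\min\{a^*/a_1,a^*/a_2\}$ is precisely \eqref{eq-energy-condition}. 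So a nonnegative normalized minimizer exists, and equality throughout your Cauchy--Schwarz/GN/AM--GM chain finishes in two lines. Your AM--GM equality $(a^*-a_1)x=(a^*-a_2)y$ is the gradient-norm version of the paper's $L^4$-ratio condition.

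Each version has an advantage. Yours is shorter, and its chain even re-proves $\mathcal{O}(a_1,a_2,\beta_*)\ge 1$ without the continuity argument behind Remark \ref{26July9-1-rmk}(a). The paper's gets the no-mass-loss property directly from the inequality chain, so it does not depend on the Euler--Lagrange/Liouville argument inside Lemma \ref{lem-existence-O}. Note also that you implicitly use $\beta_*>0$ (true since $a_i<a^*$) to transfer equality to the Cauchy--Schwarz step.
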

	
	\begin{proof}
		\textbf{Case 1: The lower surface $\beta_* = \beta_*(a_1, a_2)$.} By Remark \ref{26July9-1-rmk}, $\mathcal{O}(a_1, a_2, \beta_*) \ge 1$. Assume, for contradiction, that $\mathcal{O}(a_1, a_2, \beta_*) = 1$. Similar to \eqref{th-26Sep15-5}, there exists a minimizing sequence $\{ (u_{1n}, u_{2n})\}_{n \ge 1}$ of $\mathcal{O}(a_1, a_2, \beta_*) = 1$ satisfying \eqref{thre-26Sep13-4}, and there exist non-negative, radially symmetric, non-increasing functions $u_1, u_2 \in H_r^1(\mathbb{R}^2)$ such that for $i=1,2$,
\begin{equation}\label{eq-limit}
		\lim_{n\to \infty}	\mathcal{F}_{a_1, a_2, \beta_*}(u_{1n}, u_{2n}) = 1,
		\end{equation}
	\begin{equation}\label{eq-weak-convergence}
        \begin{aligned}
            &
			u_{in} \rightharpoonup u_i \mbox{ \ in \ } H_r^1(\R^2),
            \quad
            \nabla u_{in} \rightharpoonup \nabla u_i \mbox{ \ in \ }
            L^2(\mathbb{R}^2),
            \quad
            u_{in} \rightharpoonup u_i \mbox{ \ in \ }
            L^2(\mathbb{R}^2),
            \\
            & 
			\| u_{in} - u_i \|_{4} \to 0, 
            \quad
            \| u_{1n} u_{2n} - u_1 u_2 \|_{2} \to 0
            \mbox{ \ as \ } n \to \infty.
        \end{aligned}
	\end{equation}
By \eqref{eq-limit} and $\| \nabla u_{1n}\|_2^2 + \| \nabla u_{2n}\|_2^2 = 1$ in \eqref{thre-26Sep13-4}, there exist large constants $N_0>0$ and $C_1>1$ such that $\frac{a_1}{2} \| u_{1n}\|_4^4 + \frac{a_2}{2} \| u_{2n}\|_4^4 + \beta_{*} \| u_{1n} u_{2n}\|_2^2 \in (C_1^{-1}, C_1)$ for $n>N_0$. By $a_1, a_2 \in (0, a^{*}]$ and H\"older inequality, there exists a large constant $C_2 > 1$ such that $\| u_{1n}\|_4 + \| u_{2n}\|_4 \in (C_2^{-1}, C_2)$ for $n > N_0$. By \eqref{eq-weak-convergence}, we have $\lim_{n \to \infty} \|u_{in}\|_{4} = \|u_i\|_4$, $i=1,2$, and then $\|u_1\|_4 + \|u_2\|_4>0$. Without loss of generality, we assume $\|u_2\|_4>0$. Denote $t_n = \tfrac{\| u_{1n}\|_4^2}{\| u_{2n}\|_4^2}$. Then $\lim\limits_{n\to \infty} t_n = \tfrac{\| u_{1}\|_4^2}{\| u_{2}\|_4^2}$. We claim that
		\begin{equation}\label{eq-proportion-u1n-u2n}
		\frac{\| u_{1}\|_4^2}{\| u_{2}\|_4^2} = \sqrt{\frac{a^* - a_2}{a^* - a_1}}.
		\end{equation}
Indeed,
\begin{equation*}
\begin{aligned}
&
\mathcal{F}_{a_1, a_2, \beta_{*}}(u_{1n}, u_{2n})
\stackrel{\eqref{eq-GN-inequality}}{\ge}  
a^{*}
\frac{
\| u_{1n} \|_{4}^{4}
+  \| u_{2n} \|_{4}^{4} }{ a_1 \| u_{1n}\|_4^4 + a_2 \| u_{2n} \|_4^4 + 2 \beta_{*} \| u_{1n} u_{2n} \|_2^2}
\\
\ge \ &
a^{*}
\frac{
\| u_{1n} \|_{4}^{4}
+  \| u_{2n} \|_{4}^{4} }{ a_1 \| u_{1n}\|_4^4 + a_2 \| u_{2n} \|_4^4 + 2 \beta_{*} \| u_{1n}\|_4^2 \| u_{2n} \|_4^2}
\\
= \ & 
a^{*}
\frac{
t_n^2
+  1 }{ a_1 t_n^2 + a_2  + 2 \beta_* t_n }
= 
1 +
\frac{
(\sqrt{a^{*} - a_1} t_n -
\sqrt{a^{*} - a_2})^2 }{ a_1 t_n^2 + a_2  + 2 \beta_{*} t_n }.
\end{aligned}
\end{equation*}
If \eqref{eq-proportion-u1n-u2n} fails, it will contradict \eqref{eq-limit}. By \eqref{eq-proportion-u1n-u2n}, we have $\| u_{1}\|_4 > 0$.

By \eqref{eq-weak-convergence} and \eqref{eq-limit}, we have $\lim_{n\to \infty} \| u_{1n} u_{2n} \|_2^2 = \| u_{1} u_{2} \|_2^2$ and
\begin{equation}\label{eq-value-F-u1-u2}
		1 =	\lim_{n \to \infty} \mathcal{F}_{a_1, a_2, \beta_*}(u_{1n}, u_{2n}) = \frac{2}{a_1 \| u_1\|_4^4 + a_2 \| u_2\|_4^4 + 2\beta_* \| u_1 u_2\|_2^2}.
\end{equation}
It follows that
\begin{equation}\label{26July8-3}
\begin{aligned}
& 
2 \stackrel{\eqref{eq-value-F-u1-u2}}{=} a_1 \| u_1\|_4^4 + a_2 \| u_2\|_4^4 + 2\beta_* \| u_1 u_2\|_2^2
\\
\le \ & a_1 \| u_1\|_4^4 + a_2 \| u_2\|_4^4 + 2\beta_*
\| u_1\|_4^2
\| u_2\|_4^2
= 
\Big( 
a_1 \frac{\| u_1\|_4^4}{\| u_2\|_4^4} + 2\beta_*
\frac{\| u_1\|_4^2}
{\| u_2\|_4^2}
+ a_2 
\Big) \| u_2\|_4^4
\\
\stackrel{\eqref{eq-proportion-u1n-u2n}}{=} \ & \Big[ 
a_1 \frac{a^* - a_2}{a^* - a_1} + 2 (a^* - a_2)
+ a_2 
\Big] \| u_2\|_4^4
=
\frac{ a^{*} [ 2 a^{*} - (a_1 + a_2)  ] }{a^{*} - a_1} \| u_2\|_4^4.
\end{aligned}
\end{equation}
Combining \eqref{eq-proportion-u1n-u2n}, we have
\begin{equation}\label{26July8-1}
\| u_1 \|_4^4 \ge \frac{2 (a^* - a_2)}{a^* [2a^* - (a_1 + a_2)]}, \quad 
\| u_2 \|_4^4 \ge \frac{2 (a^* - a_1)}{a^* [2a^* - (a_1 + a_2)]}.
\end{equation}

By \eqref{eq-weak-convergence},
\begin{equation*}
\begin{aligned}
&
\| u_i \|_2^2 \le \liminf_{n \to \infty} \| u_{in}\|_2^2 = 1,
\ i=1,2,
\\
&
\| \nabla u_1 \|_2^2 + \| \nabla u_2 \|_2^2 \le \liminf_{n \to \infty} \| \nabla u_{1n}\|_2^2
+
\liminf_{n \to \infty} \| \nabla u_{2n}\|_2^2
\le 
\liminf_{n\to \infty} 
( 
\| \nabla u_{1n} \|_2^2  + \| \nabla u_{2n} \|_2^2
)
\stackrel{\eqref{thre-26Sep13-4}}{=} 1,
\end{aligned}
\end{equation*}
which implies
\begin{equation*}
\| \nabla u_1\|_2^2 \| u_1\|_2^2 + \| \nabla u_2\|_2^2 \| u_2\|_2^2
\le \| \nabla u_{1} \|_2^2  + \| \nabla u_2\|_2^2
\le 
1.
\end{equation*}
On the other hand,
\begin{equation*}
\| \nabla u_1\|_2^2 \| u_1\|_2^2 + \| \nabla u_2\|_2^2 \| u_2\|_2^2 
\stackrel{\eqref{eq-GN-inequality}}{\ge} \frac{a^*}{2}(\| u_1\|_4^4 + \| u_2\|_4^4) \stackrel{\eqref{26July8-1}}{\ge} 1.
\end{equation*}
By \eqref{eq-proportion-u1n-u2n}, we have $\| \nabla u_i\|_2^2 > 0$, $i=1,2$. Thus, we obtain 
\begin{equation}\label{26July8-4}
\| u_1\|_2^2 = \| u_2\|_2^2 = 1.
\end{equation}
And in \eqref{26July8-3}, by $\beta_*>0$, it holds that $\| u_1 u_2\|_2^2 = \| u_1\|_4^2 \| u_2\|_4^2$, which implies that $|u_1|^2 = C |u_2|^2$ a.e. with a constant $C \ge 0$ by H\"older's inequality \cite[Theorem 2.3]{liebAnalysis2001} . Combining \eqref{26July8-4}, we have
\begin{equation*}
|u_1| = |u_2| \mbox{ \ a.e.},
\end{equation*}
which contradicts \eqref{eq-proportion-u1n-u2n} due to the assumption $a_1 \neq a_2$. Therefore, $\mathcal{O}(a_1, a_2, \beta_*) > 1$.

		\textbf{Case 2: The upper surface $\beta^* = \beta^*(a_1, a_2)$.}
		Choose the trial functions $\big(\frac{Q(x)}{\sqrt{a^*}}, \frac{\lambda Q(\lambda x)}{\sqrt{a^*}}\big)$ with $\lambda \in (0,\infty)$. Using \eqref{eq-Q-property-1}, we obtain
		\begin{equation*}
				\mathcal{F}_{a_1, a_2, \beta^*}\Big(\frac{Q(x)}{\sqrt{a^*}}, \frac{\lambda Q(\lambda x)}{\sqrt{a^*}}\Big) 
				= \frac{(1 + \lambda^2) a^*}{a_1 + a_2 \lambda^2 + \frac{\beta^*}{a^*} \lambda^2 \int_{\R^2} Q^2(x) Q^2(\lambda x) \,\rmd x} =: F(\lambda).
		\end{equation*}        
		Set
		\begin{equation*}
			G(\lambda) := (1 + \lambda^2) a^*, \qquad 
			H(\lambda) := a_1 + a_2 \lambda^2 + \frac{\beta^*}{a^*} \lambda^2 \int_{\R^2} Q^2(x) Q^2(\lambda x) \,\rmd x.
		\end{equation*}
		A straightforward calculation gives $G(1) = H(1) = 2a^*$, which implies $F(1) = 1$. Since $Q, \nabla Q$ have exponentially decay as $|x| \to \infty$, by Green's formula, we compute
		\begin{equation*}
			\begin{aligned}
				H'(1) &= 2a_2 + 4\beta^* + \frac{2\beta^*}{a^*} \int_{\R^2} Q^3(x) \nabla Q(x) \cdot x \,\rmd x 
				= 2a_2 + 4\beta^* - \frac{\beta^*}{a^*} \int_{\R^2} Q^4(x) \,\rmd x \\
				&= 2a_2 + 2\beta^* = 2a^* + a_2 - a_1.
			\end{aligned}
		\end{equation*}
		Thus,
		\begin{equation*}
			F'(1) = \frac{G'(1) H(1) - H'(1) G(1)}{H^2(1)} = \frac{a_1 - a_2}{2a^*} \neq 0.
		\end{equation*}
	This indicates that there exists $\lambda_1$ close to $1$ such that $F(\lambda_1) < F(1) = 1$, which implies $\mathcal{O}(a_1, a_2, \beta^*) < 1$.
	\end{proof}

	From Lemma \ref{lem-existence-1}, to prove Theorem \ref{thm-main}, we only need to prove the following theorem:
	\begin{theorem}\label{thm-gamma}
		Suppose $0 < a_1, a_2 < a^*$, then there exists a unique continuous function $\gamma = \gamma(a_1, a_2)$ satisfying
        $\begin{cases}
        \gamma = \beta_{*} = \beta^{*}
        & \mbox{ if \ } a_1 = a_2
        \\
        \gamma \in (\beta_{*}, \beta^{*}) & \mbox{ if \ } a_1 \ne a_2
        \end{cases} $ such that
\begin{equation}\label{26July9-2}
			\mathcal{O}(a_1, a_2, \beta) \begin{cases}
				> 1 & \mbox{ \ if \ } 0 < \beta < \gamma
                \\
				= 1 & \mbox{ \ if \ } \beta = \gamma
                \\
				< 1 & \mbox{ \ if \ } \beta > \gamma.
			\end{cases}
		\end{equation}
	\end{theorem}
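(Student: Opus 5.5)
The plan is to define $\gamma(a_1,a_2) := \sup\{\beta > 0 : \mathcal{O}(a_1,a_2,\beta) > 1\}$ and then prove, in turn, the trichotomy \eqref{26July9-2}, the location of $\gamma$, and its continuity.

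\textbf{Trichotomy.} Fix $0<a_1,a_2<a^*$.
\begin{itemize}
\item[(a)] By Lemma \ref{lem-O-properties}(ii), $\mathcal{O}>1$ for $\beta<\beta_*$. By Lemma \ref{lem-O-properties}(iii), $\mathcal{O}<1$ for $\beta>\beta^*$. Hence $\gamma\in[\beta_*,\beta^*]$, and it is finite.
\item[(b)] $\beta\mapsto\mathcal{O}(a_1,a_2,\beta)$ is continuous by Lemma \ref{lem-O-properties}(i) and non-increasing by Lemma \ref{lem-O-properties}(iv). Therefore $\mathcal{O}>1$ on $(0,\gamma)$, $\mathcal{O}(a_1,a_2,\gamma)=1$, and $\mathcal{O}\le 1$ for $\beta\ge\gamma$.
\item[(c)] It remains to show strict inequality above $\gamma$. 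I will use Corollary \ref{cor-decrease} at the base point $\beta=\gamma$. For this I must check \eqref{eq-energy-condition} there, i.e. $1=\mathcal{O}(a_1,a_2,\gamma)<\min\{a^*/a_1,a^*/a_2\}$. This holds because $a_i<a^*$. The corollary then gives $\mathcal{O}(a_1,a_2,\beta)<\mathcal{O}(a_1,a_2,\gamma)=1$ for every $\beta>\gamma$.
\item[(d)] Uniqueness of $\gamma$ is immediate: any $\gamma'$ satisfying \eqref{26July9-2} must coincide with this threshold.
\end{itemize}
The existence of a minimizer for $\mathcal{O}$ at the critical value, needed in (c), is the step that removes the possible plateau described in Remark \ref{26July9-1-rmk}(b). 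It is now available through Lemma \ref{lem-existence-O}, so no further work is needed there.

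\textbf{Location of $\gamma$.}
\begin{itemize}
\item If $a_1=a_2$, then $\beta_*=\beta^*=a^*-a_1$, so step (a) forces $\gamma=a^*-a_1$.
\item If $a_1\ne a_2$, Lemma \ref{lem-O-energy-estimate} gives $\mathcal{O}(a_1,a_2,\beta_*)>1$, so $\gamma\neq\beta_*$. It also gives $\mathcal{O}(a_1,a_2,\beta^*)<1$, so $\gamma\ne\beta^*$. Hence $\gamma\in(\beta_*,\beta^*)$.
\end{itemize}

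\textbf{Continuity.} This is the main remaining point. Fix $(a_1,a_2)$ and $\epsilon>0$ small.
\begin{itemize}
\item By strictness, $\mathcal{O}(a_1,a_2,\gamma-\epsilon)>1$ and $\mathcal{O}(a_1,a_2,\gamma+\epsilon)<1$.
\item Local Lipschitz continuity of $\mathcal{O}$ in all three variables (Lemma \ref{lem-O-properties}(i)) shows these two strict inequalities persist for $(a_1',a_2')$ near $(a_1,a_2)$.
\item By the trichotomy at $(a_1',a_2')$, this gives $\gamma(a_1',a_2')\in(\gamma-\epsilon,\gamma+\epsilon)$.
\end{itemize}
If $\gamma-\epsilon\le0$ were a concern, it cannot arise: $\gamma\ge\beta_*>0$, so small $\epsilon$ keeps $\gamma-\epsilon$ in the range where Lemma \ref{lem-O-properties} applies. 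One caveat is that the local Lipschitz property should be used on a compact neighborhood in $(a_1,a_2,\beta)$ with all entries positive, which is the setting of Lemma \ref{lem-O-properties}.
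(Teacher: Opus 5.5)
Your proposal is correct and follows essentially the paper's route: the trichotomy comes from Lemma \ref{lem-O-properties}(i)--(iv), with Corollary \ref{cor-decrease} applied at $\beta=\gamma$ (legitimate since $\mathcal{O}(a_1,a_2,\gamma)=1<\min\{a^*/a_1,a^*/a_2\}$), the location of $\gamma$ from Lemma \ref{lem-O-energy-estimate}, and continuity by perturbing the strict inequalities at $\gamma\pm\varepsilon$. The only cosmetic differences are that you define $\gamma$ as $\sup\{\beta>0:\mathcal{O}(a_1,a_2,\beta)>1\}$ rather than $\min\{\beta:\mathcal{O}(a_1,a_2,\beta)=1\}$, and that you handle $a_1=a_2$ by the squeeze $\gamma\in[\beta_*,\beta^*]$ rather than citing Remark \ref{26July9-1-rmk}(c).
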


	\begin{proof}[Proof of Theorem \ref{thm-gamma}]\textbf{Existence and uniqueness of critical surface $\gamma(a_1,a_2)$ satisfying \eqref{26July9-2}.} 
    Given $a_1, a_2$, by Lemma \ref{lem-O-properties} (iv), if there exists a $\gamma$ satisfying \eqref{26July9-2}, then $\gamma$ must be unique.

    When $a_1 = a_2$, we take $\gamma = \beta_{*} = \beta^{*}$. Then, Lemma \ref{lem-O-properties} (ii), (iii), and Remark \ref{26July9-1-rmk} (c) deduce Theorem \ref{thm-gamma}.

When $a_1 \ne a_2$, $\beta_{*} < \beta^{*}$. By Lemma \ref{lem-O-energy-estimate}, we have $\mathcal{O}(a_1, a_2, \beta_*) > 1$ and $\mathcal{O}(a_1, a_2, \beta^*) < 1$. Set
\begin{equation}
\gamma = \gamma(a_1, a_2):=\min\{ \beta  \mid \mathcal{O}(a_1,a_2,\beta)=1\}.
\end{equation}
By Lemma \ref{lem-O-properties} (i), (iv),
\begin{equation*}
\gamma \in (\beta_{*}, \beta^{*}),
\quad
\mathcal{O}(a_1, a_2, \gamma)=1,
\mbox{ \ and \ } 
\mathcal{O}(a_1, a_2, \beta) > 1 \mbox{ \ for \ } 0<\beta < \gamma.
\end{equation*}       
Since $\mathcal{O}(a_1, a_2, \gamma) = 1 < \min\{\frac{a^*}{a_1}, \frac{a^*}{a_2} \}$, by Corollary \ref{cor-decrease},
\begin{equation*}
\mathcal{O}(a_1, a_2, \beta)  <  1 \mbox{ \ for \ } \beta > \gamma.
\end{equation*}

		\textbf{Continuity of $\gamma(a_1, a_2)$.}
		Given any $0< a_1^0, a_2^0 < a^{*}$, denote $\gamma^0 = \gamma(a_1^0, a_2^0)$. For any $\varepsilon > 0$, we need to show that there exists $\delta > 0$ such that 
        \begin{equation}\label{26July9-6}
        |\gamma(a_1, a_2) - \gamma^0| < \varepsilon \mbox{ \ for \ } |a_1 - a_1^0| + |a_2 - a_2^0| < \delta,
        \ 
        0 < a_1, a_2 < a^*.
        \end{equation}
	By \eqref{26July9-2},
		\begin{equation*}
        \mathcal{O}(a_1^0, a_2^0, \gamma^0) = 1,
        \quad
			\mathcal{O}(a_1^0, a_2^0, \gamma^0 - \varepsilon) > 1 > \mathcal{O}(a_1^0, a_2^0, \gamma^0 + \varepsilon).
		\end{equation*}
By Lemma \ref{lem-O-properties} (i),  there exists $\delta_1 > 0$ such that for $(a_1, a_2)$ satisfying $|a_1 - a_1^0| + |a_2 - a_2^0| < \delta_1$, $0 < a_1, a_2 < a^*$, we have
		\begin{equation*}
			\big| \mathcal{O}(a_1, a_2, \gamma^0 - \varepsilon) - \mathcal{O}(a_1^0, a_2^0, \gamma^0 - \varepsilon) \big| < [\mathcal{O}(a_1^0, a_2^0, \gamma^0 - \varepsilon) - 1]/2.
		\end{equation*}
		Consequently,
		\begin{equation*}
			\mathcal{O}(a_1, a_2, \gamma^0 - \varepsilon) > 1.
		\end{equation*}
		Similarly, there exists $\delta_2 > 0$ such that
		\begin{equation*}
			\mathcal{O}(a_1, a_2, \gamma^0 + \varepsilon) < 1
            \mbox{ \ for \ } 
            |a_1 - a_1^0| + |a_2 - a_2^0| < \delta_2, \ 
            0 < a_1, a_2 < a^*.
		\end{equation*}
Take $\delta = \min\{\delta_1, \delta_2\}$. Then
		\begin{equation*}
			\mathcal{O}(a_1, a_2, \gamma^0 - \varepsilon) > 1 > \mathcal{O}(a_1, a_2, \gamma^0 + \varepsilon)
            \mbox{ \ for \ } 
            |a_1 - a_1^0| + |a_2 - a_2^0| < \delta, 
            \ 
            0 < a_1, a_2 < a^*.
		\end{equation*}
Lemma \ref{lem-O-properties} (iv) and $\mathcal{O}(a_1, a_2, \gamma(a_1, a_2)) = 1$ deduce $\gamma^0 - \varepsilon < \gamma(a_1,a_2) < \gamma^0 + \varepsilon$. Then \eqref{26July9-6} holds.
\end{proof}

\begin{proof}[Proof of Theorem \ref{thm-main}]
	The result follows directly from Theorem \ref{thm-gamma} and Lemma \ref{lem-existence-1}.
\end{proof}

Finally, we focus on the proof of Theorem \ref{thm-main-2}.
Inspired by \cite{guoBlowupSolutionsTwo2017a}, the key step is to prove that $\mathfrak{e}(a_1, a_2, \gamma) = 1$ given in \eqref{eq-minimizer-problem-e}. 

\begin{proof}[Proof of Theorem \ref{thm-main-2}]

Since $0 < a_1, a_2 < a^*$, by Theorem \ref{thm-gamma}, $\mathcal{O}(a_1,a_2,\gamma) = 1$. By $V_1, V_2 \ge 0$, $\mathcal{O}(a_1, a_2, \gamma) = 1$, and the relationship between  $\mathcal{E}_{a_1, a_2, \gamma}(\cdot)$ in \eqref{th-26Sep18-4} and $\mathcal{F}_{a_1,a_2, \gamma}(\cdot)$, we have $\mathfrak{e}(a_1,a_2,\gamma) \ge 0$.

Next, we prove $\mathfrak{e}(a_1,a_2,\gamma) \le 0$.  
By $0 < a_1, a_2 < a^*$ and Lemma \ref{lem-existence-O}, there exist $u_1, u_2 \in H^1(\mathbb{R}^2)$ attaining $\mathcal{O}(a_1,a_2,\gamma) = 1$ such that
    \begin{equation*}
        \mathcal{F}_{a_1,a_2, \gamma}(u_1, u_2) = 1,
        \quad
        \| u_1\|_2^2 = \| u_2\|_2^2 = 1.
    \end{equation*}
Moreover, $u_1, u_2$ are non-negative, smooth, radially symmetric, and 
\begin{equation}\label{th-26Sep18-2}
|u_i| + |\nabla u_i| \le C \rme^{- \sigma |x|}
\end{equation}
with some constants $C, \sigma > 0$ in $\mathbb{R}^2$, $i=1,2$. Given a smooth cut-off function $\eta$ satisfying
    \begin{equation*}
         0 \le \eta \le1 \text{ \ in \ } \mathbb{R}^2,
        \quad
        \eta=1 \text{ \ in \ }B_1(0),
        \quad
        \eta=0 \text{ \ in \  } \R^2\setminus B_2(0),
    \end{equation*}
for any $\tau \ge 1$, $i=1,2$, we define
    \begin{equation*}
        u_{i,\tau}(x)
    :=
    c_{i,\tau}\tau
    \eta(x-x_0)
    u_i( \tau(x-x_0) )
    \end{equation*}
with a constant $c_{i,\tau}>0$ to make $\| u_{i,\tau}\|_2^2=1$. Direct calculation gives
\begin{equation*}
\| u_{i,\tau}\|_2^2
=
c_{i,\tau}^2 \int_{\mathbb{R}^2} | \eta( \tau^{-1} y ) u_i(y) |^2  \rmd y.
\end{equation*}
Since $\|u_i\|_{2} = 1$, $|u_i| \lesssim \rme^{-\sigma |x|}$, we have 
\begin{equation}\label{th-26Sep18-1}
\int_{\mathbb{R}^2} | \eta( \tau^{-1} y ) u_i(y) |^2  \rmd y = 
1 + O(\rme^{-\sigma \tau})
\mbox{ \ and then \ }
c_{i,\tau} = \Big( \int_{\mathbb{R}^2} | \eta( \tau^{-1} y ) u_i(y) |^2  \rmd y \Big)^{-1/2} = 1 + O(\rme^{-\sigma \tau}).
\end{equation}
By \eqref{th-26Sep18-2}, a direct calculation gives
\begin{equation}\label{th-26Sep18-6}
    \| \nabla u_{i,\tau}\|_2^2
    =
    \tau^2\| \nabla u_i\|_2^2+ O(\rme^{-\epsilon \tau}),
    \quad
    \| u_{i,\tau}\|_4^4
    =
    \tau^2\| u_i\|_4^4+ O(\rme^{-\epsilon \tau}),
    \quad
    \| u_{1,\tau} u_{2,\tau}\|_2^2
    =
    \tau^2\| u_1 u_2\|_2^2+ O(\rme^{-\epsilon \tau})
\end{equation}
with a sufficiently small constant $\epsilon > 0$. Indeed, for the third part,
\begin{equation*}
\begin{aligned}
&
\| u_{1,\tau} u_{2,\tau}\|_2^2
=
c_{1,\tau}^2 c_{2,\tau}^2 \tau^4 
\int_{\mathbb{R}^2}
\eta^4(x-x_0)
    u_1^2\big(\tau(x-x_0)\big)
    u_2^2\big(\tau(x-x_0)\big) \rmd x
\\
= \ & c_{1,\tau}^2 c_{2,\tau}^2 \tau^2 
\int_{\mathbb{R}^2}
\eta^4(\tau^{-1} y)
    u_1^2(y)
    u_2^2(y) \rmd y
=  \tau^2  
\Big( \int_{\mathbb{R}^2}
u_1^2(y) u_2^2(y) \rmd y + O(\rme^{-\sigma \tau}) \Big).
\end{aligned}
\end{equation*}
The first and second parts can be deduced similarly. Additionally,
\begin{equation*}
\begin{aligned}
&
\int_{\mathbb{R}^2}V_i(x)|u_{i,\tau}(x)|^2 \rmd x
= c_{i,\tau}^2 \tau^2 \int_{\mathbb{R}^2} V_i(x) \eta^2(x-x_0) u_i^2\big(\tau(x-x_0)\big)
\rmd x
\\
= \ & c_{i,\tau}^2 \int_{\mathbb{R}^2} V_i(x_0 + \tau^{-1} y) \eta^2(\tau^{-1} y) u_i^2(y)
\rmd y.
\end{aligned}
\end{equation*}
Since $V_i(x_0) = 0$ and $V_i(x)$ is continuous at $x_0$, for any $y \in \mathbb{R}^2$, we have
\begin{equation*}
\lim_{\tau\to\infty} V_i(x_0 + \tau^{-1} y) \eta^2(\tau^{-1} y) u_i^2(y) = 0.
\end{equation*}
By $V_i \in L_{\rm{loc}}^{\infty}(\mathbb{R}^2)$ and the cut-off function $\eta$, there exists a constant $C_1>0$ such that
\begin{equation*}
|V_i(x_0 + \tau^{-1} y) \eta^2(\tau^{-1} y)| \le C_1 \mbox{ \ for \ } y\in \mathbb{R}^2, \ \tau \ge 1. 
\end{equation*}
By Lebesgue's dominated convergence theorem and \eqref{th-26Sep18-1}, we have
\begin{equation*}
\lim_{\tau\to \infty} \int_{\mathbb{R}^2}V_i(x)|u_{i,\tau}(x)|^2 \rmd x = 0.
\end{equation*}
Hence, we have $\{(u_{1,\tau}, u_{2,\tau})\}_{\tau \ge 1} \subset \mathcal{M}$ defined in \eqref{thre-26Sep12-1}. For $\mathcal{E}_{a_1, a_2, \gamma}(\cdot)$ defined in \eqref{th-26Sep18-4}, by \eqref{th-26Sep18-6} and $\mathcal{F}_{a_1,a_2, \gamma}(u_1, u_2) = 1$, we have
\begin{equation*}
\mathcal{E}_{a_1, a_2, \gamma}(u_{1,\tau}, u_{2,\tau})
= 
O(\rme^{-\epsilon \tau}) + \sum_{i=1}^{2} \int_{\R^2} V_i(x) |u_{i\tau}(x)|^2 \rmd x,
\end{equation*}
which goes to $0$ as $\tau \to \infty$. Thus, $\mathfrak{e}(a_1,a_2,\gamma) \le 0$. In sum, we get $\mathfrak{e}(a_1,a_2,\gamma)=0$.

If $\mathfrak e(a_1,a_2,\gamma)=0$ is attained by some $(w_1,w_2)\in\mathcal{M}$, since $\mathcal{O}(a_1,a_2,\gamma) = 1$, then we would have $\int_{\mathbb R^2} V_i |w_i|^2 \rmd x=0$ for $i=1,2$, and $\mathcal F_{a_1,a_2,\gamma}(w_1,w_2)=1$.
	Since \(V_i(x)\to\infty\) as \(|x|\to\infty\), there exists \(R>0\) sufficiently large such that \(V_i(x)>1\) in $|x| \ge R$. Hence $w_i=0$ a.e. in $|x| \ge R$.
	Moreover, since $\mathcal O(a_1,a_2,\gamma)=1$, $(w_1,w_2)$ is a minimizer of $ \mathcal O(a_1,a_2,\gamma)$. Similar to \eqref{th-26Sep17-1}, $(w_1, w_2)$ is a weak
	solution of the Schr\"odinger-type system
    \begin{equation}
    \begin{cases}
        -\Delta w_1 + \lambda_1 w_1 = a_1 w_1^3 + \gamma w_1 w_2^2, 
        \\
        -\Delta w_2 + \lambda_2 w_2 = a_2 w_2^3 + \gamma w_1^2 w_2,
    \end{cases}
        \mbox{ \ in \ } \mathbb{R}^2
    \end{equation} 
with some constants $\lambda_1, \lambda_2$. By the strong unique continuation property for Schr\"odinger equations (see \cite[Remark 6.7]{509f7a07-890c-3be6-998a-aca39a278594}),  we obtain $ w_i\equiv0$, which contradicts $\|w_i\|_2^2=1$.
\end{proof}


\end{document}